\def\beq{\begin{equation}}
\def\eeq{\end{equation}}
\def\bea{\begin{eqnarray}}
\def\eea{\end{eqnarray}}
\def\bsp{\begin{split}}
\def\esp{\end{split}}
\def\am{\arg\min}
\def\ol{\overline}
\def\tfrac#1#2{{\textstyle \frac{#1}{#2}}}  
\def\cR{\mathcal{R}}
\def\cS{\mathcal{S}}
\def\N{{{\rm I}\!{\rm N}}}
\def\R{{{\rm I}\!{\rm R}}}
\def\be#1{\begin{equation} \label{#1} }
\def\ee{\end{equation}}
\def\barr{\begin{array}}
\def\earr{\end{array}}
\def\ol#1{\overline{#1}}
\def\tfrac#1#2{{\textstyle \frac{#1}{#2}}}
\def\gdel{g^\delta}
\newcommand{\normklein}[1]{\|{#1}\|}
\newcommand{\normGklein}[1]{\normklein{#1}_G}
\newcommand{\qold}{q_{\operatorname{old}}}
\newcommand{\uold}{u_{\operatorname{old}}}
\newcommand{\uoldeins}{u_{\operatorname{old},1}}
\newcommand{\vold}{v_{\operatorname{old}}}
\newcommand{\voldh}{v_{\operatorname{old},h}}
\newcommand{\uoldh}{u_{\operatorname{old},h}}
\newcommand{\qkdelta}{q^{k,\delta}}
\newcommand{\uk}{u^k}
\newcommand{\wk}{w^k}
\newcommand{\qkminuseinsdelta}{q^{k-1,\delta}}
\newcommand{\qh}{q_h}
\newcommand{\xh}{x_h}
\newcommand{\wh}{w_h}
\newcommand{\zh}{z_h}
\newcommand{\uh}{u_h}
\newcommand{\vh}{v_h}
\newcommand{\yh}{y_h}
\newcommand{\xeinsh}{x_h^1}
\newcommand{\xzweih}{x_h^2}
\newcommand{\xdreih}{x_h^3}
\newcommand{\qhzweik}{q^k_{h^2_k}}
\newcommand{\whzweik}{w^k_{h^2_k}}
\newcommand{\qhdreik}{q^k_{h^3_k}}
\newcommand{\whdreik}{w^k_{h^3_k}}
\newcommand{\uoldhdreikplus}{u_{\operatorname{old},h^3_{k+1}}^{k+1}}
\newcommand{\uoldhvierkplus}{u_{\operatorname{old},h^4_{k+1}}^{k+1}}
\newcommand{\whk}{w^k_h}
\newcommand{\uhk}{u^k_h}
\newcommand{\uhdreik}{u^k_{h^3_k}}
\newcommand{\qoldnull}{q_{\operatorname{old}}^0}
\newcommand{\uoldnull}{u_{\operatorname{old}}^0}
\newcommand{\uoldhnullnull}{u_{\operatorname{old},h_0}^0}
\newcommand{\uoldheinsnullnull}{u_{\operatorname{old},h^1_0}^0}
\newcommand{\qhnullnull}{q^0_{h_0}}
\newcommand{\qhnull}{q^0_{h}}
\newcommand{\qhk}{q^k_h}
\newcommand{\qhkk}{q^k_{h_k}}
\newcommand{\uhkk}{u^k_{h_k}}
\newcommand{\whkk}{w^k_{h_k}}
\newcommand{\qk}{q^k}
\newcommand{\qhkminuseinsstern}{q^{k_*-1}_{h_{k_*-1}}}
\newcommand{\qoldksterndeltal}{q_{\operatorname{old}}^{k_*(\delta_l)}}
\newcommand{\qoldkstern}{q_{\operatorname{old}}^{k_*}}
\newcommand{\voldeins}{v_{\operatorname{old}}^1}
\newcommand{\qoldzwei}{q_{\operatorname{old}}^2}
\newcommand{\qolddrei}{q_{\operatorname{old}}^3}
\newcommand{\qoldvier}{q_{\operatorname{old}}^4}
\newcommand{\uoldk}{u_{\operatorname{old}}^k}
\newcommand{\uoldhk}{u_{\operatorname{old},h}^k}
\newcommand{\uoldhkk}{u_{\operatorname{old},h_k}^k}
\newcommand{\qoldk}{q_{\operatorname{old}}^k}
\newcommand{\qkminuseins}{q^{k-1}}
\newcommand{\uoldkplus}{u_{\operatorname{old}}^{k+1}}
\newcommand{\uoldhkplus}{u_{\operatorname{old},h}^{k+1}}
\newcommand{\qoldkplus}{q_{\operatorname{old}}^{k+1}}
\newcommand{\qhkstern}{q_h^{k_*}}
\newcommand{\qhkminuseinsk}{q_{k-1}^{h_{k-1}}}
\newcommand{\Ieinsh}{I_{1,h}}
\newcommand{\Izweih}{I_{2,h}}
\newcommand{\Idreih}{I_{3,h}}
\newcommand{\Ivierh}{I_{4,h}}
\newcommand{\Ieinshk}{I_{1,h}^k}
\newcommand{\Izweihk}{I_{2,h}^k}
\newcommand{\Idreihk}{I_{3,h}^k}
\newcommand{\Ivierhk}{I_{4,h}^k}
\newcommand{\Ieinshkk}{I_{1,h_k}^k}
\newcommand{\Izweihkk}{I_{2,h_k}^k}
\newcommand{\Idreihkk}{I_{3,h_k}^k}
\newcommand{\Ivierhkk}{I_{4,h_k}^k}
\newcommand{\Ieinsk}{I_{1}^k}
\newcommand{\Izweik}{I_{2}^k}
\newcommand{\Idreik}{I_{3}^k}
\newcommand{\Ivierk}{I_{4}^k}
\newcommand{\uttheta}{\tilde{\underline\theta}}
\newcommand{\ottheta}{\tilde{\overline \theta}}
\newcommand{\dualW}[2]{\langle#1,#2\rangle_{W^*,W}}    
\author{B.~Kaltenbacher \and A.~Kirchner \and S.~Veljovi\' c}
\title{Goal oriented adaptivity in the IRGNM for parameter identification in PDEs I:\\ 
reduced formulation}
\newtheorem{thm}{Theorem}
\newtheorem{prop}{Proposition}
\newtheorem{rem}{Remark}
\newtheorem{ass}{Assumption}
\newtheorem{algorithm}{Algorithm}{\bf}{\it}
\begin{document}

\maketitle

\abstract{In this paper we study adaptive discretization of the iteratively regularized Gauss-Newton method IRGNM 
  with an a posteriori (discrepancy principle) choice of the regularization parameter in each Newton step and of 
  the stopping index. We first of all prove convergence and convergence rates under some accuracy requirements 
  formulated in terms of four quantities of interest. Then computation of error estimators for these quantities 
    based on a weighted dual residual method is discussed, which results in an algorithm for adaptive refinement. 
    Finally we extend the results from the Hilbert space setting with quadratic penalty to Banach spaces and general 
    Tikhonov functionals for the regularization of each Newton step.}

\section{Introduction}

Parameter identification problems in partial differential equations (PDEs) can often be written as nonlinear
ill-posed operator equations
\beq\label{OEq}
F(q) = g,
\eeq
where $F$ is a nonlinear operator between Hilbert spaces $Q$ and $G$ and where
the given data $g^\delta$ is noisy with the noise level $\delta$:
\beq\label{noise}
||g - g^\delta|| \leq \delta.
\eeq
Throughout this paper we will assume that a solution $q^\dagger$ to \eqref{OEq} exists.

In case of inverse problems for PDEs, $F$ is the  composition of a parameter-to-solution map 
$$\begin{array}{rcl}
S\colon Q&\to&V\\
q&\mapsto&u
\end{array}$$ 
with some measurement operator 
$$\begin{array}{rcl}
C\colon V&\to&G\\
u&\mapsto&g\,,
\end{array}$$
where $V$ is an appropriate Hilbert space.
Here, we will write the underlying (possibly nonlinear) PDE in its weak form:
\beq\label{PDEintro} 
{ \text{For } q\in Q \text{ find } u\in V:}\quad A(q,u)(v)=(f,v) \quad \forall v\in W \,,
\eeq
where $u$ denotes the PDE solution, $q$ some searched for coefficient or boundary function, and $f\in W^*$ is
some given right hand side in the dual of some Hilbert space $W$.
We will assume that the PDE \eqref{PDEintro} and especially also its linearization at $(q,u)$ is uniquely and
stably solvable.


For the stable solution of \eqref{OEq} with noisy data, 
we consider the iteratively regularized Gauss-Newton method (IRGNM)
first of all (section \ref{sec_redIRGNM}) in the reduced form 
\beq \label{redIRGNM}
\qkdelta = \qkminuseinsdelta - (F'(\qkminuseinsdelta)^* F'(\qkminuseinsdelta) + \alpha_{k}
I)^{-1}(F'(\qkminuseinsdelta)^* (F(\qkminuseinsdelta)-g^\delta) + \alpha_k(\qkminuseinsdelta - q_0))
\eeq
or equivalently
\beq\label{defqeq}
\qkdelta \in \am_q ({\cal T}_{\alpha_k}(q))
=\am_q \|F'(\qkminuseinsdelta)(q-\qkminuseinsdelta) +F(\qkminuseinsdelta)-g^\delta\|_G^2
+ \alpha_k \|q-q_0\|_Q^2,
\eeq
(see, e.g., \cite{BakuKokurin,KNSBuch} and the references therein).
For some all-at-once formulations of the IRGNM we refer to \cite{KKV13} {(part II of this paper)}. 

The regularization parameter $\alpha_k$ and the overall stopping index $k_*$ have to be chosen in an
appropriate way in order to guarantee convergence. We will here use an inexact Newton / discrepancy principle
type strategy, as it has been shown to yield convergence of the IRGNM even in a Banach space setting in
\cite{KSS09}, see also \cite{HohageWerner} for a convergence analysis in a still more general setup but with
different parameter choice strategies for $\alpha_k$ and $k_*$.

Our aim is to consider adaptively discretized versions of the formulations \eqref{redIRGNM} defined by replacing the spaces $Q$, $V$, $W$ with
finite dimensional counterparts $Q_h$, $V_h$, $W_h$ (using possibly different discretizations of $V,W$ in
\eqref{PDElin} and \eqref{PDE}).
These should be sufficiently precise so that the convergence results from the continuous setting can be
carried over, but save computational effort by using degrees of freedom only where really necessary.
For this purpose we will make use of goal oriented error estimators {(\cite{BeckerRannacher,BeckerVexler})}, 
that control the error in some quantities of interest $I$, which are functionals of the variables $q,u,w$ (see \eqref{eq_IRGNM_optprob}-\eqref{PDElin} below). 
{ We follow the concept proposed in \cite{GKV}, where an inexact Newton method for the computation of a regularization parameter according to
the discrepancy principle is combined with adaptive refinement using goal oriented error estimators. 
While \cite{GKV} is limited to linear inverse problems, in \cite{KKV10} the idea has been extended to the nonlinear case.
Different from \cite{KKV10}, we do not treat the nonlinear problem directly here, but use an iterative solution algorithm, the iteratively 
regularized Gauss-Newton
method \eqref{redIRGNM}, \eqref{defqeq} and treat a sequence of linearized problems instead.}

The remainder of this paper is organized as follows. In Section \ref{sec_redIRGNM} we formulate the Newton step equation as linear quadratic optimal control problem and derive its discretization together with certain quantities of interest, whose precision will be crucial for obtaining convergence results for the overall regularized Newton iteration. This will be substantiated in the 
convergence and convergence rates results provided in the subsections \ref{subsec_conv_redIRGN} and \ref{subsec_rates_redIRGN}. Subsection 
\ref{subsec_estimatorsIRGNM} describes computation of the required error estimators by a goal oriented approach and Subsection \ref{subsec_algorithm} provides the full algorithm. The method and its analysis is extended to a setting with general data misfit and regularization terms in Subsection \ref{subsec_generalize}. We conclude with a few remarks in Section \ref{sec_conclusions}.

\section{Reduced form of the discretized IRGNM} \label{sec_redIRGNM}

We consider the iteration rule \eqref{redIRGNM} for solving the optimization problem  
\beq\label{eq_IRGNM_defqeq}
\min_{q\in Q} \|F'(q^{\delta,k-1})(q-q^{\delta,k-1}) +F(q^{\delta,k-1})-g^\delta\|_G^2
+ {\frac 1 {\beta_k}} \|q-q_0\|_Q^2\,,
\eeq 
{where the regularization parameter $\beta_k$ is updated in each Gauss Newton iteration according to an inexact Newton method guaranteeing 
a relaxed version of the discrepancy principle (see Step 15 in Algorithm \ref{alg:GKVAlgorithm}, \cite{GKV,KKV10}).} 
Note that although the domain $\mathcal{D}(F)$ might be a strict subset of $Q$, we need not explicitly restrict $q$ to $\mathcal{D}(F)$ in this minimization, 
since we will assume that $\mathcal{D}(F)$ contains a ball of radius $\rho$ around $q_0$ and prove that all iterates remain in this ball, cf. \eqref{ineq0}. 
So minimizers over $\mathcal{D}(F)$ will automatically be minimizers over $Q$.

We start with a detailed description of a single iteration step \eqref{redIRGNM} for fixed (discretized)
previous
iterate $q^{\delta,k-1}=\qold \in Q$ in a continuous and later in the discretized setting actually used in
computations, along with the quantities of interest required in error estimation and adaptive refinement.

We formulate the optimization problem \eqref{eq_IRGNM_defqeq} 
as optimal control problem 
\be{eq_IRGNM_optprob}
    \min_{(q,\uold,w)\in Q \times V \times V}  \|C'(\uold)(w)+C(\uold) -g^\delta\|_G^2
	+ {\frac 1 {\beta_k}} \|q-q_0\|_Q^2
\ee
\begin{align}
  \text{s.t.} \qquad\qquad A(\qold,\uold)(v) &= f(v)& \forall v\in W\,,\label{PDE}\\
  A_u'(\qold,\uold)(w,v) 
 &= - A_q'(\qold,\uold)(q-\qold,v) & \forall v\in W\label{PDElin}\,,
\end{align}
since { for a solution $q^{\delta,k}$ of \eqref{eq_IRGNM_defqeq}} $(q^{\delta,k},S(\qold),S'(\qold)(q^{\delta,k} - \qold))$ solves \eqref{eq_IRGNM_optprob}-\eqref{PDElin}.

In most of this section we omit the superscript $\delta$ (denoting dependence on the noisy data) 
in order to be able to better indicate the difference between continuous and discretized 
quantities. 

We consider the following quantities of interest 
\beq \label{eq_IRGNM_I1234}
\begin{aligned}
\tilde I_1\colon &\ Q \times V\times V\times \R \to \R\,, &&(q,\uold,w,\beta) &&\mapsto \|C'(\uold)(w)+C(\uold)-g^\delta\|_G^2+{\frac  1 \beta}\|q-q_0\|_Q^2\\
\tilde I_2\colon &\ V \times V\to \R\,, && (\uold,w) &&\mapsto \|C'(\uold)(w)+C(\uold) -g^\delta\|_G^2\\
\tilde I_3\colon &\ V \to \R\,,  &&\uold&&\mapsto \|C(\uold) -g^\delta\|_G^2\,,
\end{aligned}
\eeq
i.e. we assume the knowledge about error estimates
\begin{align*}
\eta_1 &\ge |\tilde I_1(q,\uold,w,\beta) - \tilde I_1(q_h,\uoldh,w_h,\beta)|\\
\eta_2 &\ge |\tilde I_2(\uold,w) - \tilde I_2({\uold}_h,w_h)|\\
\eta_3 &\ge |\tilde I_3(\uold) - \tilde I_3({\uold}_h)|\,,
\end{align*}
where $q_h,\uoldh,w_h$ is a discrete approximate solution to \eqref{eq_IRGNM_optprob}, which will 
be concretised in the following. {The error bounds $\eta_1$, $\eta_2$ and $\eta_3$ will be estimated 
 using goal oriented error estimators cf. Section \ref{subsec_estimatorsIRGNM}.}

Additionally we define the functionals  
\beq \label{eq_IRGNM_I1234reduced}
\begin{aligned}
I_1\colon &\ Q\times Q\times \R \to  \R\,, &&(\qold,q,\beta)&&\mapsto \|F'(\qold)(q-\qold)+F(\qold)-g^\delta\|_G^2+{\frac  1 \beta}\|q-q_0\|_Q^2\\
I_2\colon &\ Q\times Q \to \R\,, &&(\qold,q)&&\mapsto \|F'(\qold)(q-\qold)+F( \qold) -g^\delta\|_G^2\\
I_3\colon &\ Q\to \R\,, &&\qold&&\mapsto \|F( \qold) -g^\delta\|_G^2\\
I_4\colon &\ Q\to \R\,, &&q&&\mapsto \|F(q) -g^\delta\|_G^2\,,
\end{aligned}
\eeq
which can be seen as reduced versions of \eqref{eq_IRGNM_I1234}, since for a solution $(q,\uold,w)$ of 
\eqref{eq_IRGNM_optprob}-\eqref{PDElin} and $u \in V$ fulfilling 
\be{eq_IRGNM_currentstateequation}
	A(q,u)(v) = f(v) \qquad \forall v\in W
\ee
there holds 
\be{eq_IRGNM_ItildeI}
\begin{aligned}
	\tilde I_1(q,\uold,w,\beta) &=I_1(\qold,q,\beta)\,,&
	\quad \tilde I_2(\uold,w) &= I_2(\qold,q)\,,\\
	\quad \tilde I_3(\uold) &= I_3(\qold)\,,&
	\quad \tilde I_3(u) &= I_4(q)\,.
\end{aligned}
\ee 

The (continuous) quantities of interest in the $k$-th iteration step are then defined as follows: 
For a solution $(\qk,\uoldk,\wk)$ of \eqref{eq_IRGNM_optprob} for given $\qold = \qoldk$ and $\beta = \beta_k$
and $\uk$ fulfilling 
\be{eq_IRGNM_currentstateequationk}
	A(q^k,u^k)(v) = f(v) \qquad \forall v\in W
\ee
in the $k$-th iteration let
\beq \label{eq_IRGNM_I1234k}
\begin{aligned}
I_1^k & \coloneqq  \tilde I_1(\qk, \uoldk, \wk, \beta_k)  
      	= I_1(\qk,\qoldk, \beta_k) \\
      & = \|F'(\qoldk)(\qk-\qoldk)+F(\qoldk)-g^\delta\|_G^2+{\frac 1 {\beta_k}}\|\qk-q_0\|_Q^2\\
I_2^k & \coloneqq  \tilde I_2(\qoldk,\wk)  
      	= I_2(\qoldk,\qk)
       = \|F'(\qoldk)(\qk-\qoldk)+F(\qoldk)-g^\delta\|_G^2\\
I_3^k &\coloneqq  \tilde I_3(\uoldk)
       = I_3(\qoldk)
      	= \|F(\qoldk) -g^\delta\|_G^2\\
I_4^k & \coloneqq  \tilde I_3(\uk) 
	= I_4(\qk)
       = \|F(\qk) -g^\delta\|_G^2\,.
\end{aligned}
\eeq
{ To formulate the quantities of interest \eqref{eq_IRGNM_I1234} for a discrete setting,}
we consider finite element spaces $Q_h,V_h,W_h$ to $Q,V,W$, and $S_h$ denotes the discrete solution operator of the 
state equation. The discretized version of the optimal control problem \eqref{eq_IRGNM_optprob}
for given $\qold\in Q_h$ can then be formulated as 
\beq\label{eq_IRGNM_minJ1Discrete} 
\min_{(q,\uold,w)\in Q_h \times V_h \times V_h} 
\|C'(\uold)(w)+C(\uold)-g^\delta\|_G^2+{\frac  1 \beta}\|q-q_0\|_Q^2
\eeq
subject to 
\begin{align}  
A(\qold,\uold)(v)&=f(v)& \forall v\in W_h
\label{PDEh}\\
A'_u(\qold,\uold)(w,v)+A'_q(\qold,\uold)(q-\qold,v)&=0 & \forall v \in W_h\,. 
\label{PDElinh}
\end{align}

Equation \eqref{PDEh} is equivalent to
$\uold = S_h(\qold)$ and \eqref{PDElinh} is equivalent to 
$w = S_h'(\qold)(q-\qold)$, such that the reduced form of \eqref{eq_IRGNM_minJ1Discrete} reads
\beq\label{eq_IRGNM_minJ1Discretereduced} 
\min_{q \in Q_h} 
\|F_h'(\qold)(q-\qold)+F_h(\qold)-g^\delta\|_G^2+{\frac  1 \beta}\|q-q_0\|_Q^2
\eeq
with $F_h = C \circ S_h$.

\begin{rem}
One can think of using different discretizations $(V_h,W_h)$ for \eqref{PDEh} and
$(\tilde{V}_h,\tilde{W}_h)$ for \eqref{PDElinh} (see Algorithm \ref{alg_IRGNM}), which we do not indicate here in order to avoid a
too complicated setup that would probably not lead to much gain in computational efficiency.
\end{rem}

Then the discrete quantities of interest in the reduced form (i.e. the discrete counterparts to 
\eqref{eq_IRGNM_I1234reduced}) are defined by
\beq \label{eq_IRGNM_I1234reduceddiscrete}
\begin{aligned}
\Ieinsh\colon &\ Q \times Q \times \R \to  \R\,, &&(\qold,q,\beta)&&\mapsto \|F_h'(\qold)(q-\qold)+F_h(\qold)-g^\delta\|_G^2+{\frac 1 \beta}\|q-q_0\|_Q^2\\
\Izweih\colon &\ Q\times Q \to \R\,, &&(\qold,q)&&\mapsto \|F_h'(\qold)(q-\qold)+F_h( \qold) -g^\delta\|_G^2\\
\Idreih\colon &\ Q\to \R\,, &&\qold&&\mapsto \|F_h( \qold) -g^\delta\|_G^2\\
\Ivierh\colon &\ Q\to \R\,, &&q&&\mapsto \|F_h(q) -g^\delta\|_G^2\,,
\end{aligned}
\eeq
such that consistent with \eqref{eq_IRGNM_ItildeI} for a solution $(q_h,\uoldh,w_h)$ of 
the discretized problem \eqref{eq_IRGNM_minJ1Discrete}-\eqref{PDElinh} there holds
\begin{align*}
	\tilde I_1(q_h,\uoldh,w_h,\beta_h) &= \Ieinsh(\qold,q_h,\beta)\,,
	&\quad \tilde I_2(\uoldh,w_h) &= \Izweih(\qold,q_h)\,,\\
	\quad \tilde I_3(\uoldh) &= \Idreih(\qold)\,,
	&\quad \tilde I_3(u_h) &= \Ivierh(q_h)\,.
\end{align*}

Correspondingly, the discrete quantities of interest in the $k$-th iteration step 
(i.e. the discrete counterparts to 
\eqref{eq_IRGNM_I1234k}) for a solution $(\qhk,\uoldhk,\whk)$ of \eqref{eq_IRGNM_minJ1Discrete}
for given $\qold = \qoldk \in Q_h$ can be formulated as
\beq \label{eq_IRGNM_I1234kdiscrete}
\begin{aligned}
\Ieinshk & \coloneqq  \tilde I_1(\qhkk , \uoldhkk , \whkk, \beta^k_{h_k} )  
       	=  \Ieinshkk (\qhkk ,\qoldk, \beta_k) \\
       & = \|F_{h_k} '(\qoldk)(\qhkk -\qoldk)+F_{h_k} (\qoldk)-g^\delta\|_G^2+{\frac 1 {\beta_k}}\|\qhkk -q_0\|_Q^2\\
 \Izweihk & \coloneqq  \tilde I_2(\qoldk,\whkk)  
       	= \Izweihkk(\qoldk,\qhkk)
        = \|F_{h_k} '(\qoldk)(\qhkk -\qoldk)+F_{h_k} (\qoldk)-g^\delta\|_G^2\\
 \Idreihk &\coloneqq  \tilde I_3(\uoldhkk)
        = \Idreihkk(\qoldk)
       	= \|F_{h_k} (\qoldk) -g^\delta\|_G^2\\
 \Ivierhk & \coloneqq  \tilde I_3(\uhkk)
        = \Ivierhkk(\qhkk)
        = \|F_{h_k} (\qhkk) -g^\delta\|_G^2\,,
 \end{aligned}
\eeq
where we introduced the notation $h_k$ (replacing $h$), denoting the discretization in step $k$,  
in order to distiguish between the possibly different discretizations during the iterative process in the following.

Note that the norms in $G$ and in $Q$ (and later on also the one in $V$)  
{
as well as the operator $C$ and the semilinear form $a\colon Q\times V\times W \to \R$ defined by the relation 
$a(q,u)(v) = \dualW{A(q,u)}{v}$} (where $\dualW{.}{.}$ denotes the duality pairing between $W^*$ and $W$)
are assumed to be evaluated exactly.
 
At the end of each iteration step we set 
 \begin{equation}\label{eq:qoldkplus}
	\qoldkplus \coloneqq  \qhk\,.
\end{equation}

\begin{rem}\label{rem:auxit}
The sequence of iterates we actually consider is the discrete one $(\qhkk)_{k\in\N}$, which we also update
according to \eqref{eq:qoldkplus}. Besides that, for theoretical purposes we keep a sequence of continuous
iterates $(\qk)_{k\in\N}$, where each member $\qk$ of this sequence emerges from a member
$\qoldk=\qhkminuseinsk$ of the sequence of discretized iterates $(\qhkk)_{k\in\N}$, but {\em not } from
$\qkminuseins$, see Figure \ref{fig:iteration}.
\begin{figure}
\begin{center}
\setlength{\unitlength}{2cm}
\begin{picture}(5,4)
\thinlines
\put(0,0.5){\vector(1,0){4.5}}
\put(-0.05,0.45){$\bullet$}
\put(0.95,0.45){$\bullet$}
\put(1.95,0.45){$\bullet$}
\put(2.95,0.45){$\bullet$}
\put(3.95,0.45){$\bullet$}
\put(-0.2,0.3){$k=0$}
\put(0.8,0.3){$k=1$}
\put(1.8,0.3){$k=2$}
\put(2.8,0.3){$k=3$}
\put(3.8,0.3){$k=4$}
\thicklines
\put(0,1){\line(2,1){1}}
\put(1,1.5){\line(4,3){1}}
\put(2,2.25){\line(1,1){1}}
\put(3,3.25){\line(4,3){1}}
\put(-0.05,0.95){$\bullet$}
\put(0.95,1.45){$\bullet$}
\put(1.95,2.2){$\bullet$}
\put(2.95,3.2){$\bullet$}
\put(3.95,3.95){$\bullet$}
\put(0.05,0.85){$q_0$}
\put(1.05,1.35){$q^1_{h_1}=\qoldzwei$}
\put(2.05,2.1){$q^2_{h_2}=\qolddrei$}
\put(3.05,3.1){$q^3_{h_3}=\qoldvier$}
\put(4.05,3.85){$q^4_{h_4}$}
\thinlines
\put(0,1){\line(3,2){1}}
\put(1,1.5){\line(2,1){1}}
\put(2,2.25){\line(2,1){1}}
\put(3,3.25){\line(2,1){1}}
\put(0.95,1.61){$\diamond$}
\put(1.95,1.95){$\diamond$}
\put(2.95,2.7){$\diamond$}
\put(3.95,3.7){$\diamond$}
\put(1.05,1.75){$q_1$}
\put(2.05,1.9){$q_2$}
\put(3.05,2.6){$q_3$}
\put(4.05,3.6){$q_4$}

\end{picture}
\end{center}
\caption{Sequence of discretized iterates and auxiliary sequence of continuous iterates}\label{fig:iteration}
\end{figure}
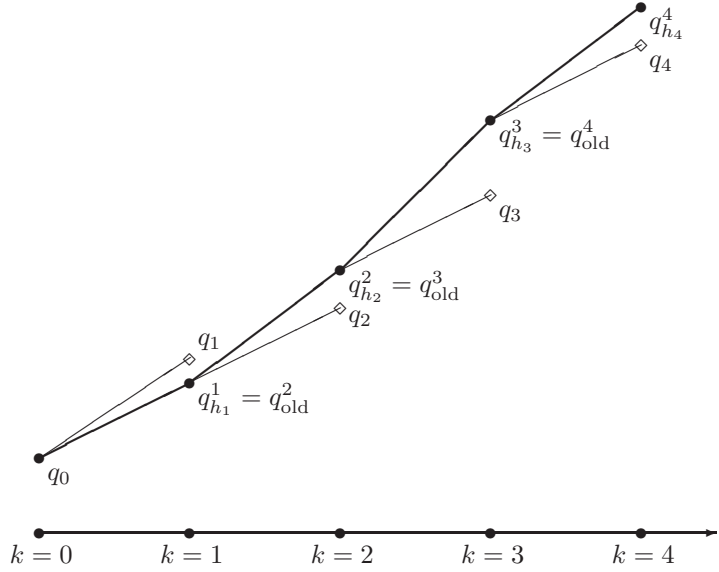
One of the reasons for the necessity of considering this auxiliary continuous iterates is the key inequality
\eqref{min} in the proof of the convergence theorem below, which makes use of minimality of the iterate $q_k$
in all of $Q$ (and not only in the finite dimensional subspace $Q_h$) thus allowing for comparison to the infinite dimensional exact solution $q^\dagger$. 

We stress once more that the discretization may be different in each iteration, as  indicated by the
superscripts $h_k$, $h_{k-1}$ here. In order to keep the notation readable we will suppress the iteration
index $k$ in the superscript $h_k$ whenever this is possible without causing confusion. 
\end{rem}

\begin{rem}\label{remI3I4}
In view of \eqref{eq:qoldkplus} 
and the last two identities in \eqref{eq_IRGNM_ItildeI} 
one might think that $\Idreih^{k+1} = \|F(\qoldkplus) -g^\delta\|_G^2$ and 
$\Ivierhk = \|F(\qhk) -g^\delta\|_G^2$ are the same, but this is not the case, since
there holds indeed 
\[
	\uhk = S_{h_k}(\qhk) = S_{h_k}(\qoldkplus) 
\]
for $h=h_k$, i.e. with respect to the discretization from step $k$, but
\[
	\uoldhkplus = S_{h_{k+1}}(\qoldkplus) 
\]
for $h=h_{k+1}$, i.e. with respect to the discretization from step $k+1$. Due to the possibly 
different discretizations, in general there holds 
\[
	\uhk \neq \uoldhkplus\,.
\]
Also $I_3^{k+1}= \|F(\qoldkplus) -g^\delta\|_G^2$  and $I_4^k = \|F(\qk) -g^\delta\|_G^2$ are 
not the same, because 
\[
	\qoldkplus\coloneqq  \qhk \neq \qk\,,
\]
see Figure \ref{fig:iteration}.
\end{rem}

\begin{rem}
Note that even the discretizations $h_k$ for fixed $k$ can differ in the different quantities of interest
{during one Gauss Newton iteration cf. Algorithm \ref{alg_IRGNM}.} 
Tracking the proof of the main convergence result Theorem \ref{conv} the reader can verify
that only  $I_{1,h}^k$ and $I_{2,h}^k$  have to be evaluated on the same mesh, since in the proof we will need 
the identity 
 \beq\label{I1I2}
 I_{1,h}^k= I_{2,h}^k + {\frac 1 {\beta_k}}\|q^k_{h_k} -q_0 \|^2\,,
 \eeq 
 which is guaranteed by assuming exact evaluation of the $Q$-norm $\|q^k_h -q_0 \|_Q$.
\end{rem}

In order to assess and -- by adaptive refinement -- to control the differences 
\beq\label{intcond}
|I_{i,h}^{k} - I_i^{k}| \leq \eta_i^{k} \,, \quad i\in\{1,2,3,4\}
\eeq
between the exact quantities of interest and their counterparts resulting from discretization, 
we will make use of goal oriented error estimators, which will be explained in more detail in 
Section \ref{subsec_estimatorsIRGNM}.

We select $\beta_k$ according to an inexact Newton condition (cf. \cite{Hanke,Rieder}) which can 
be interpreted as a discrepancy principle with ``noise level'' $\tilde\theta I_{3,h}^{k}$
\beq\label{inexNewton}
{\tilde{\underline\theta}} I_{3,h}^{k} \leq 
I_{2,h}^{k}
\leq {\tilde{\overline\theta}} I_{3,h}^{k} \,,
\eeq
i.e.,
$$
{\tilde{\underline\theta}} \|F_h(\qoldk) -g^\delta\|_G^2 \leq 
\|F_h'(\qoldk)(\qhk-\qoldk)+F_h(\qoldk)-g^\delta\|_G^2
\leq {\tilde{\overline\theta}} \|F_h(\qoldk) -g^\delta\|_G^2 \,,
$$
for some $0 < \tilde{\underline\theta} \leq \tilde{\overline\theta} <\frac 12$. Note that this regularization
parameter can be computed in an efficient manner according to \cite{GKV}, see Theorem 1 there. 
We mention in passing that the latter
would as well allow us to use the continuous version $I_2^{k}$ in \eqref{inexNewton}, but we prefer to formulate the condition with the discretized actually computed quantities anyway.

The overall Newton iteration is stopped according to a generalized discrepancy principle
\beq\label{stop}
k_* = \min \{ k\in \mathbb{N} ~ : ~ I_{3,h}^{k} \leq \tau^2\delta^2 \}.
\eeq

In our convergence analysis we will use the following weak sequential closedness assumption on $F$:
\beq\label{Fseqclosed}
(q_n \rightharpoonup q \land F(q_n) \to g) \Rightarrow (q \in \mathcal{D}(F) \land F(q) = g)
\eeq
for all $\{q_n\}_{n \in \mathbb{N}} \subseteq Q$ together with the tangential cone condition (also often called Scherzer condition)
\beq\label{tangcone}
\|F(q) - F(\bar{q}) - F'(q)(q - \bar{q})\|_G \leq c_{tc} \| F(q) - F(\bar{q})\|_G \qquad \forall q, \bar{q}
\in \mathcal{B}_{\rho}(q_0) \subseteq \mathcal{D}(F)\subseteq Q 
\eeq
for some $\rho>0$,  $0 < c_{tc} < 1$, which are both typical conditions in the analysis of regularization methods for nonlinear ill-posed problems cf., e.g., \cite{EHNBuch,KNSBuch} and the references therein.

\subsection{Convergence} \label{subsec_conv_redIRGN}
\begin{thm}\label{conv}
Let $F$ satisfy the weak sequential closedness condition \eqref{Fseqclosed} and the tangential cone condition \eqref{tangcone}
with $c_{tc} < \frac 1 4$ sufficiently small.
Let, further, $\tau>0$ be chosen sufficiently large and $0<\tilde{\underline\theta}<\tilde{\overline\theta}$ sufficiently small , such that
\beq\label{taucond}
2\left(c_{tc}^2 + \frac{(1+c_{tc})^2}{\tau^2}\right) < \tilde{\underline\theta}
\quad \textrm{and}\quad 
\frac{2\tilde{\overline\theta} + 4c_{tc}^2}{1-4c_{tc}^2} < 1\,.
\eeq
Finally, let for the discretization error with respect to the quantities of interest 
\eqref{intcond} hold,
where $\eta_1^k$, $\eta_2^k$, $\eta_3^k$, $\eta_4^k$ are selected such that
\beq\label{etacond1}
\eta_1^{k} + 2 c_{tc}^2 \eta_3^{k}\leq
\left({\tilde{\underline\theta}} - 2\left(c_{tc}^2 + \frac{(1+ c_{tc})^2}{\tau^2}\right)\right)
I_{3,h}^{k}
\eeq
\beq\label{etacond2}
\eta_3^{k}\leq c_1 I_{3,h}^{k} \ \mbox{ and } \
\eta_2^{k}\to0\,, \ \eta_3^{k}\to0\,, \ \eta_4^{k}\to0 \mbox{ as }k\to\infty 
\eeq
\beq\label{etacond3}
I_{3,h}^{k}\leq (1+c_3)I_{4,h}^{k-1}+r^k \ \mbox{ and } \
(1+c_3)\frac{2{\tilde{\overline\theta}} + 4 c_{tc}^2}{1-4c_{tc}^2}\leq c_2<1
\eeq
for some constants $c_1, c_2, c_3 >0$, and a sequence $r^k\to0$ as $k\to\infty$ (where the second condition in \eqref{etacond3} 
is possible due to the right inequality in \eqref{taucond}).

Then with $\beta_k$  and $h=h_k$ fulfilling \eqref{inexNewton}
and $k_*$ selected according to \eqref{stop} there holds 
\begin{itemize}
\item[(i)] For any solution $q^\dag \in \mathcal{B}_{\rho}(q_0)$ of \eqref{OEq}
  \beq\label{ineq0}
  \|\qhk -q_0\|^2_Q \leq \|q^\dag -q_0\|^2_Q \quad \forall k<k_*\,,
  \eeq
\item[(ii)] $k_*$ is finite,
\item[(iii)] $\qoldkstern = \qhkminuseinsstern$ converges (weakly) subsequentially to a solution of \eqref{OEq} as $\delta\to0$
in the sense that it has a weakly convergent subsequence and each
weakly convergent subsequence converges strongly to a solution of \eqref{OEq}.
If the solution $q^\dagger$ to \eqref{OEq} is unique, then $\qoldkstern$ converges strongly to
$q^\dagger$ as $\delta\to0$.
\end{itemize}
\end{thm}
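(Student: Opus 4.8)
The plan is to prove (i), (ii), (iii) in this order, since (i) supplies the $\delta$- and mesh-uniform a~priori bound needed for both (ii) and (iii). I would prove \eqref{ineq0} by induction on $k<k_*$, the induction hypothesis being that $\qoldk=\qhkminuseinsk\in\mathcal{B}_{\rho}(q_0)$ (so \eqref{tangcone} is available at $\qoldk$), the base case $\qold^0=q_0$ being trivial. The crux is that the \emph{continuous} iterate $\qk$ minimizes ${\cal T}_{\alpha_k}$ over all of $Q$ — this is exactly why the auxiliary sequence of Remark~\ref{rem:auxit} is kept. Testing minimality against $q^\dag$, writing $F'(\qoldk)(q^\dag-\qoldk)+F(\qoldk)-g^\delta=-\bigl(F(q^\dag)-F(\qoldk)-F'(\qoldk)(q^\dag-\qoldk)\bigr)+(g-g^\delta)$, and invoking \eqref{tangcone} (with $\|g-F(\qoldk)\|_G\le\delta+\sqrt{I_3^k}$), \eqref{noise}, $(a+b)^2\le2a^2+2b^2$, the bound $I_3^k\le I_{3,h}^k+\eta_3^k$ from \eqref{intcond}, and $\delta^2<I_{3,h}^k/\tau^2$ (valid for $k<k_*$ by \eqref{stop}) gives
\[
I_2^k+\tfrac{1}{\beta_k}\|\qk-q_0\|_Q^2\;\le\;\Bigl(2c_{tc}^2+\tfrac{2(1+c_{tc})^2}{\tau^2}\Bigr)I_{3,h}^k+2c_{tc}^2\eta_3^k+\tfrac{1}{\beta_k}\|q^\dag-q_0\|_Q^2 .
\]
To pass to the actually computed $\qhk$, use \eqref{I1I2} in the form $\tfrac{1}{\beta_k}\|\qhk-q_0\|_Q^2=I_{1,h}^k-I_{2,h}^k$, bound $I_{1,h}^k\le I_1^k+\eta_1^k=I_2^k+\tfrac{1}{\beta_k}\|\qk-q_0\|_Q^2+\eta_1^k$ via \eqref{intcond}, and $I_{2,h}^k\ge\tilde{\underline\theta}I_{3,h}^k$ by the left inequality of \eqref{inexNewton}. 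Inserting the display and then absorbing $\eta_1^k+2c_{tc}^2\eta_3^k$ together with the $I_{3,h}^k$-terms by \eqref{etacond1} (whose right-hand side is positive by the first inequality in \eqref{taucond}) makes everything cancel except $\tfrac{1}{\beta_k}\|\qhk-q_0\|_Q^2\le\tfrac{1}{\beta_k}\|q^\dag-q_0\|_Q^2$, i.e. \eqref{ineq0}; hence $\qhk=\qoldkplus\in\mathcal{B}_{\rho}(q_0)$, closing the induction. A variant of the same computation (using in addition $I_2^k\ge\tilde{\underline\theta}I_{3,h}^k-\tilde{\underline\theta}\eta_3^k-\eta_2^k$) shows that the auxiliary iterate $\qk$ stays in $\mathcal{B}_{\rho}(q_0)$ as well, provided $q^\dag$ is interior and the $\eta_i^k$ are small relative to $\beta_k$.

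\textbf{Part (ii).} Suppose $k_*=\infty$, so $I_{3,h}^k>\tau^2\delta^2>0$ for all $k$; I derive a contradiction via a Newton contraction. Using \eqref{tangcone} at $\qk,\qoldk$ one gets $\|F(\qk)-g^\delta\|_G\le(1-c_{tc})^{-1}\bigl(c_{tc}\sqrt{I_3^k}+\sqrt{I_2^k}\bigr)$; squaring, inserting $I_{2,h}^k\le\tilde{\overline\theta}I_{3,h}^k$ from the right inequality of \eqref{inexNewton}, and using \eqref{intcond} yields
\[
I_{4,h}^k\;\le\;I_4^k+\eta_4^k\;\le\;\frac{2\tilde{\overline\theta}+4c_{tc}^2}{1-4c_{tc}^2}\,I_{3,h}^k+\varepsilon_k' ,
\]
where $\varepsilon_k'$ is built from $\eta_2^k,\eta_3^k,\eta_4^k$; these are bounded (by (i) and boundedness of $F$ on $\mathcal{B}_{\rho}(q_0)$ the $I_{3,h}^k$ are bounded) and $\varepsilon_k'\to0$ by \eqref{etacond2}. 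Plugging this into the first inequality of \eqref{etacond3} gives the recursion $I_{3,h}^{k+1}\le c_2\,I_{3,h}^k+\varepsilon_k$ with $c_2<1$ (second part of \eqref{etacond3}, which the right inequality in \eqref{taucond} makes possible) and $\varepsilon_k\to0$; iterating shows $(I_{3,h}^k)$ is bounded and $\limsup_k I_{3,h}^k\le c_2\limsup_k I_{3,h}^k$, hence $I_{3,h}^k\to0$ — contradicting $I_{3,h}^k>\tau^2\delta^2$. Thus $k_*<\infty$.

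\textbf{Part (iii).} Let $\delta_l\to0$ with data $g^{\delta_l}$, stopping indices $k_*(\delta_l)$ (finite by (ii)), and output $q_l:=\qoldksterndeltal$. By \eqref{stop}, $I_{3,h}^{k_*(\delta_l)}\le\tau^2\delta_l^2$; combining with $\eta_3^{k}\le c_1 I_{3,h}^{k}$ from \eqref{etacond2} and $I_3^{k_*(\delta_l)}\le I_{3,h}^{k_*(\delta_l)}+\eta_3^{k_*(\delta_l)}$ gives $\|F(q_l)-g^{\delta_l}\|_G^2=I_3^{k_*(\delta_l)}\le(1+c_1)\tau^2\delta_l^2$, so $\|F(q_l)-g\|_G\le\sqrt{1+c_1}\,\tau\delta_l+\delta_l\to0$ by \eqref{noise}. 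By (i), $\|q_l-q_0\|_Q^2\le\|q^\dag-q_0\|_Q^2$, so $(q_l)$ is bounded in the Hilbert space $Q$ and has a weakly convergent subsequence $q_l\rightharpoonup\hat q$; then \eqref{Fseqclosed} gives $\hat q\in\mathcal{D}(F)$ and $F(\hat q)=g$, and weak closedness of the ball gives $\hat q\in\mathcal{B}_{\rho}(q_0)$. Applying (i) with $q^\dag=\hat q$ and using weak lower semicontinuity of $\|\cdot-q_0\|_Q$ gives $\|\hat q-q_0\|_Q\le\liminf_l\|q_l-q_0\|_Q\le\limsup_l\|q_l-q_0\|_Q\le\|\hat q-q_0\|_Q$, hence $\|q_l-q_0\|_Q\to\|\hat q-q_0\|_Q$; together with $q_l\rightharpoonup\hat q$ this forces $q_l\to\hat q$ strongly. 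If $q^\dag$ is unique, the standard subsequence argument (every subsequence has a further subsequence converging strongly to a solution, necessarily $q^\dag$) upgrades this to strong convergence $\qoldkstern\to q^\dag$ as $\delta\to0$.

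\textbf{Main obstacle.} I expect the real difficulty to be organizational rather than a single sharp estimate: disentangling the three families — the continuous iterates $\qk$, the discrete iterates $\qhkk$ on mesh $h_k$, and the error estimators $\eta_i^k$ (possibly on further meshes) — and ensuring that \emph{every} iterate at which \eqref{tangcone} is invoked genuinely lies in $\mathcal{B}_{\rho}(q_0)$, which for the continuous $\qk$ needs the small extra argument indicated above. One must also track that the inexact Newton condition \eqref{inexNewton} is used on opposite sides in (i) and (ii), that the smallness hypotheses \eqref{taucond}--\eqref{etacond3} are precisely what keep both relevant contraction factors below~$1$, and — crucially — that \eqref{etacond1} couples $\eta_1^k,\eta_3^k$ to the computable $I_{3,h}^k$ in exactly the way producing the cancellation in (i).
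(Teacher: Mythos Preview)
Your proposal is correct and follows essentially the same route as the paper: the minimality inequality in (i) tested against $q^\dag$ combined with the tangential-cone/noise/discrepancy chain and \eqref{etacond1}, the same Newton-type contraction in (ii) (you iterate on $I_{3,h}^k$ where the paper iterates on $I_{4,h}^k$, a cosmetic difference yielding the same constant $c_2$), and the standard weak-limit-plus-norm-convergence argument in (iii). You are in fact slightly more careful than the paper in making the induction structure explicit and in flagging that the \emph{continuous} auxiliary iterate $q^k$ must also lie in $\mathcal{B}_\rho(q_0)$ for \eqref{tangcone} to be applicable in part (ii) --- the paper invokes this tacitly.
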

\begin{proof}
\begin{itemize}
\item[(i):]
For $k=0$, \eqref{ineq0} trivially holds.
For all $1\leq k<k_*$ 
and any solution $q^\dag$ of \eqref{OEq} we have by \eqref{intcond} and minimality of $\qk$
\begin{equation}\label{min}
I_{1,h}^{k} \leq I_1^{k} + \eta_1^{k} = 
\| F'(\qoldk)(q^\dag-\qoldk) +F(\qoldk) -g^\delta\|_G^2 + {\frac 1 {\beta_k}} \|q^\dag -q_0\|_Q^2 + \eta_1^{k}\,.
\end{equation}
In here, according to \eqref{noise},  \eqref{stop} and \eqref{tangcone},
as well as the inequaltity $(a+b)^2 \le 2a^2 + 2b^2$ for arbitrary $a,b\in \R$ we can estimate as follows
\begin{align}
\lefteqn{\| F'(\qoldk)(q^\dag-\qoldk) + F(\qoldk) -g^\delta \|_G^2}\nonumber\\
&\leq \left(\| F'(\qoldk)(q^\dag-\qoldk) + F(\qoldk) - F(q^\dag) \|_G+\delta\right)^2
\nonumber\\
&\leq \left(c_{tc}\|F(q^\dag)- F(\qoldk) \|_G+\delta\right)^2\nonumber\\
&\leq \left(c_{tc}\left(\|g^\delta- F(\qoldk) \|_G+\delta\right)+\delta\right)^2\nonumber\\
& =  \left(c_{tc}\sqrt{I_3^k} + \left(1+c_{tc}\right)\delta\right)^2\nonumber\\
& \le  2c_{tc}^2 I_3^k+ 2(1+c_{tc})^2 \delta^2\nonumber\\
&\leq 2c_{tc}^2(I_{3,h}^{k} + \eta_3^{k})+ 2(1+c_{tc})^2\frac{I_{3,h}^{k}}{\tau^2}\nonumber\\
&=  2\left(c_{tc}^2+\frac{(1+c_{tc})^2}{\tau^2}\right) I_{3,h}^{k}
+2c_{tc}^2 \eta_3^{k} \label{estTaylor}\,.
\end{align}
On the other hand, from \eqref{I1I2}, \eqref{inexNewton} it follows that
\begin{equation}\label{min1}
I_{1,h}^{k}= I_{2,h}^{k} + {\frac 1 {\beta_k}}\|\qhk -q_0 \|_Q^2
\geq {\tilde{\underline\theta}} I_{3,h}^{k}  + {\frac 1 {\beta_k}} \|\qhk -q_0\|_Q^2
\end{equation}
which together with the previous inequality and \eqref{etacond1} gives
\begin{align*}
\tilde{\underline\theta} I_{3,h}^{k} +  {\frac 1 {\beta_k}} \|\qhk -q_0\|_Q^2 
&\leq I_1^k + \eta_1^k \\
&= \| F'(\qoldk)(q^\dag-\qoldk) + F(\qoldk) -g^\delta \|_G^2 + {\frac 1 {\beta_k}} \|q^\dagger-q_0\|_Q^2 + \eta_1^k\\
&\leq 2\left(c_{tc}^2+\frac{(1+c_{tc})^2}{\tau^2}\right) I_{3,h}^{k}
 + {\frac 1 {\beta_k}} \|q^\dag -q_0\|_Q^2 + \eta_1^{k} 
+ 2c_{tc}^2 \eta_3^{k}\\
& \leq {\tilde{\underline\theta}} I_{3,h}^{k} + {\frac 1 {\beta_k}} \|q^\dag -q_0\|_Q^2\,,
\end{align*}
which implies \eqref{ineq0}.
\item[(ii):] Furthermore, for all  
$1\leq k< k_*$ 
we have by the triangle inequality as well as \eqref{tangcone} and \eqref{inexNewton}
\begin{align}
\sqrt{I_4^{k}} & = \|F(\qk) - g^\delta\|_G\nonumber\\
& \leq \|F'(\qoldk)(\qk - \qoldk) +F(\qoldk)-g^\delta\|_G
+\|F'(\qoldk)(\qk - \qoldk) -F(\qk)+ F(\qoldk)\|_G\nonumber\\
& \leq \sqrt{I_2^{k}} + c_{tc} \|F(\qk) - F(\qoldk)\|_G\nonumber\\
& \leq \sqrt{{\tilde{\overline\theta}} I_{3,h}^{k} +\eta_2^{k}} 
+ c_{tc} (\sqrt{I_4^{k}}+\sqrt{I_3^{k}})\,, 
\label{estI4_1}
\end{align}
hence by $(a+b)^2\leq 2a^2+2b^2$ for all $a,b \in \R$
\[
I_4^{k}\leq 2 ({\tilde{\overline\theta}} I_{3,h}^{k}+\eta_2^{k}) + 2 c_{tc}^2 (2I_4^{k}+2I_3^{k})\,,
\]
which implies
\[
  I_4^k \le \frac 1 {1-4c_{tc}^2}\left(2 {\tilde{\overline\theta}} I_{3,h}^{k} + 2 \eta_2^k + 4c_{tc}^2 I_3^k \right)\,.
\]
With \eqref{intcond} and \eqref{etacond3} we can further deduce
\begin{align*}
I_{4,h}^{k} &\le \frac 1 {1-4c_{tc}^2}\left((2 {\tilde{\overline\theta}} + 4c_{tc}^2)I_{3,h}^{k} + 2 \eta_2^k + 4c_{tc}^2 \eta_3^k \right) + \eta_4^k\\
&\le \frac{2 {\tilde{\overline\theta}} + 4c_{tc}^2}{1-4c_{tc}^2} (1+c_3)I_{4,h}^{k-1}  
+ \frac 1 {1-4c_{tc}^2} \left((2 {\tilde{\overline\theta}} + 4c_{tc}^2)r^k + 2 \eta_2^k + 4c_{tc}^2 \eta_3^k \right) + \eta_4^k\\
&\le c_2 I_{4,h}^{k-1}  
+ \frac 1 {1-4c_{tc}^2} \left((2 {\tilde{\overline\theta}} + 4c_{tc}^2)r^k + 2 \eta_2^k + 4c_{tc}^2 \eta_3^k \right) + \eta_4^k\,.
\end{align*}
With the notation 
\be{ai}
  a^i\coloneqq   \frac 1 {1-4c_{tc}^2} \left((2 {\tilde{\overline\theta}} + 4c_{tc}^2)r^i + 2 \eta_2^i + 4c_{tc}^2 \eta_3^i \right) + \eta_4^i \quad\forall i\in \{1,2,\dots,k\}
\ee
there follows recursively 
\be{estI4_2}
  I_{4,h}^k \le c_2^{k}I_{4,h}^0+\sum_{j=0}^{k-1}c_2^j a^{k-j}\,.
\ee
Note that by the second part of \eqref{etacond2}, the second part of \eqref{etacond3} and the fact that $r^k\to 0$ as $k\to\infty$ (by definition 
of $r^k$) , we have $c_2^{k}I_{4,h}^0+\sum_{j=0}^{k-1}c_2^j a^{k-j} \to 0$ as $k\to\infty$. So, if the discrepancy principle never got active (i.e., $k_*=\infty$), the
sequence $(I_{4,h}^k)_{k\in\mathbb{N}}$ and therewith by assumption \eqref{etacond2} also
$(I_{3,h}^k)_{k\in\mathbb{N}}$ would be bounded by a sequence tending to zero as $k\to\infty$, which implies
that $I_{3,h}^k$ would fall below $\tau^2\delta^2$ for $k$ sufficiently large, thus yielding a contradiction. Hence
the stopping index $k_* < \infty$ is well-defined and finite.

\item[(iii):] With \eqref{noise}, \eqref{intcond}, \eqref{etacond2} and definition of $k_*$, we have 
\be{Fconv}
  \|F(\qoldkstern)-g\|_G \le \sqrt{I_3^{k_*}} + \delta  \le  \sqrt{I_{3,h}^{k_*} +\eta_3^{k_*}} + \delta
  \le \sqrt{(1+c_1)I_{3,h}^{k_*}} +\delta  \le (\sqrt{1+c_1}\tau +1) \delta \to 0
\ee
as $\delta \to 0$. Thus, due to (ii) \eqref{ineq0}  $\qoldkstern = \qhkminuseinsstern$ 
has a weakly convergent subsequence $(\qoldksterndeltal)_{l\in\mathbb{N}}$ and due to the weak sequential closedness of $F$ and \eqref{Fconv} the limit $q^*\in\mathcal{B}_{\rho}(q_0)$ of every weakly convergent subsequence is a solution to $F(q)=g$.

Strong convergence of $(\qoldksterndeltal)_{l\in\mathbb{N}}$ to $q^*$ follows from the standard argument
\[
\|\qoldksterndeltal-q^*\|_Q^2=\underbrace{\|\qoldksterndeltal -q_0\|_Q^2}_{\leq\|q^* -q_0\|_Q^2} + \|q^* -q_0\|_Q^2
-2\underbrace{\langle\qoldksterndeltal -q_0, q^* -q_0\rangle_Q}_{\to\|q^* -q_0\|_Q^2\ \mbox{\footnotesize by
weak convergence}}
\]
(with $\langle . , . \rangle_Q$ denoting the scalar product in $Q$), 
where we have used the fact that  in \eqref{ineq0} we can replace $q^\dag$ by $q^*$ since the latter
solves \eqref{OEq}.
\end{itemize}
\end{proof} 

\begin{rem}
Note that estimate \eqref{min} can alternatively be obtained by using stationarity instead of minimality of $\qk$ (which is equivalent by convexity): For all $dq\in Q$
\begin{eqnarray*}
0&=& \langle F'(\qoldk)(\qk-\qoldk) +F(\qoldk) -g^\delta, F'(\qoldk)(dq)\rangle_G
 + {\frac 1 {\beta_k}} \langle \qk -q_0,dq\rangle_Q
\end{eqnarray*}
(with $\langle . , . \rangle_G$ denoting the scalar product in $G$).  
With $dq=\qk-q^\dag$ this yields
\begin{eqnarray*}
0&=& \| F'(\qoldk)(\qk-\qoldk) +F(\qoldk) -g^\delta\|_G^2\\
&&-\langle F'(\qoldk)(\qk-\qoldk) +F(\qoldk) -g^\delta,
F'(\qoldk)(q^\dag-\qoldk) +F(\qoldk) -g^\delta\rangle_G\\
&& + {\frac 1 {\beta_k}} \|\qk -q_0\|_Q^2
- {\frac 1 {\beta_k}} \langle \qk -q_0,q^\dag-q_0\rangle_Q\,,
\end{eqnarray*}
hence by Cauchy-Schwarz and $ab\le \frac12 a^2+\frac12 b^2$ $\forall a,b\in \R$
\begin{eqnarray*}
\lefteqn{\| F'(\qoldk)(\qk-\qoldk) +F(\qoldk) -g^\delta\|_G^2
+ {\frac 1 {\beta_k}} \|\qk -q_0\|_Q^2}\\
&\leq& 
\|F'(\qoldk)(q^\dag-\qoldk) +F(\qoldk) -g^\delta\|_G^2+ {\frac 1 {\beta_k}} \|q^\dag-q_0\|_Q^2
\,.
\end{eqnarray*}
\end{rem}

\subsection{Convergence rates} \label{subsec_rates_redIRGN}

To prove convergence rates we will consider Hilbert space source conditions
\beq\label{source}
\exists s \in Q \mbox{ s.t. }q^\dag -q_0 = f(F'(q^\dag)^*F'(q^\dag)) s\,.
\eeq 
with 
$f\colon [0,\infty)\to[0,\infty)$ satisfying
\begin{eqnarray}
&&f(0)=0\,, \quad f^2 \mbox{ strictly monotonically increasing,}\nonumber\\ 
&&\phi \mbox{ convex, where }\phi\coloneqq (f^2)^{-1} \mbox{i.e., $\phi^{-1}(\lambda)=f^2(\lambda)$ and}
\label{fphipsi}\\ 
&&\Theta\colon \lambda\mapsto f(\lambda)\sqrt{\lambda}\mbox{ strictly monotonically increasing.}\nonumber
\end{eqnarray}
Examples of functions $f$ satisfying \eqref{fphipsi} are H\"older type $f(\lambda)=\lambda^\nu$, $\nu>0$
or logarithmic type $f(\lambda)=\ln(\frac{1}{\lambda})^{-p}$, $\lambda\in(0,1/e]$, $p>0$ functions.

If \eqref{source} holds, then we have by Jensen's inequality
\begin{eqnarray}\label{source1}
|\langle q^\dag - q_0, q -q^\dag \rangle_Q| &=& |\langle s,
f(F'(q^\dag)^*F'(q^\dag))(q-q^\dag)\rangle_Q| \nonumber\\
&\leq& \|s\|_Q \, \|q-q^\dag\|_Q f\left( \frac{\|F'(q^\dag)(q-q^\dag)\|_G^{2}}{\|q-q^\dag\|_Q^2}\right)\,.
\end{eqnarray}

Using estimate \eqref{ineq0} from the proof of Theorem \ref{conv}, as well as the definition of the
stopping index according to the discrepancy principle, we can therefore make use of Theorem 1 in \cite{KKV10}
to obtain

\begin{thm} \label{rates}
Let the conditions of Theorem \ref{conv} and additionally
the source condition \eqref{source} for some function $f$ with \eqref{fphipsi} be fullfiled.

Then there exists a $\bar{\delta}>0$ and a constant $\bar{C}>0$ independent of $\delta$ such that
for all $\delta\in (0,\bar{\delta}]$
\begin{equation}\label{rates0}
\|\qhkstern -q^\dag\|_Q^2
\leq \frac{\bar{C}^2\delta^2}{\Theta^{-1} \left(\tfrac{\bar{C}}{2\|s\|_Q}\delta\right)}
= 4\|s\|^2 f^2(\Theta^{-1}(\tfrac{\bar{C}}{2\|s\|_Q}\delta))
\end{equation}
where $\Theta(\lambda)\coloneqq f(\lambda)\sqrt{\lambda}$. 
\end{thm}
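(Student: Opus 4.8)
The plan is to deduce \eqref{rates0} from only two facts already established in Theorem~\ref{conv}: the a priori bound \eqref{ineq0} on $\|\qhkstern-q_0\|_Q$ and the fact that $\qhkstern$ is the iterate returned by the discrepancy stopping rule \eqref{stop}. This is precisely the situation treated in Theorem~1 of \cite{KKV10}, which I would invoke; for completeness I sketch the short self-contained argument, which has three ingredients: an orthogonality-type decomposition of $\|\qhkstern-q^\dag\|_Q^2$ exploiting \eqref{ineq0}, the source-condition estimate \eqref{source1}, and an $O(\delta)$ bound on the linearized residual $\|F'(q^\dag)(\qhkstern-q^\dag)\|_G$.

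For any solution $q^\dag\in\mathcal{B}_{\rho}(q_0)$ of \eqref{OEq} I would first write, using \eqref{ineq0} (which also places $\qhkstern$ in $\mathcal{B}_{\rho}(q_0)$),
\begin{align*}
\|\qhkstern-q^\dag\|_Q^2
&=\|\qhkstern-q_0\|_Q^2-\|q^\dag-q_0\|_Q^2-2\langle q^\dag-q_0,\,\qhkstern-q^\dag\rangle_Q\\
&\le -2\langle q^\dag-q_0,\,\qhkstern-q^\dag\rangle_Q .
\end{align*}
Inserting \eqref{source1} with $q=\qhkstern$ and dividing by $\|\qhkstern-q^\dag\|_Q$ --- the case $\qhkstern=q^\dag$ being trivial --- gives
\[
\|\qhkstern-q^\dag\|_Q\le 2\|s\|_Q\,f\!\left(\frac{\|F'(q^\dag)(\qhkstern-q^\dag)\|_G^{2}}{\|\qhkstern-q^\dag\|_Q^{2}}\right).
\]

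Next I would bound the linearized residual: the tangential cone condition \eqref{tangcone} (with $q=q^\dag$, $\bar q=\qhkstern$) gives $\|F'(q^\dag)(\qhkstern-q^\dag)\|_G\le(1+c_{tc})\|F(\qhkstern)-g\|_G$, and the chain of estimates \eqref{Fconv} established in the proof of Theorem~\ref{conv} (combining \eqref{noise}, \eqref{intcond}, \eqref{etacond2} and the stopping rule \eqref{stop}) gives $\|F(\qhkstern)-g\|_G\le(\sqrt{1+c_1}\,\tau+1)\delta$, so $\|F'(q^\dag)(\qhkstern-q^\dag)\|_G\le\bar C\delta$ with $\bar C:=(1+c_{tc})(\sqrt{1+c_1}\,\tau+1)$, a constant independent of $\delta$. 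Writing $R:=\|\qhkstern-q^\dag\|_Q$ and $\varrho:=\|F'(q^\dag)(\qhkstern-q^\dag)\|_G\le\bar C\delta$, the last display reads $R\le 2\|s\|_Q f(\varrho^2/R^2)$. With $\lambda_0:=\Theta^{-1}(\varrho/(2\|s\|_Q))$ one has $f^2(\lambda_0)\lambda_0=\varrho^2/(4\|s\|_Q^2)$ by $\Theta(\lambda)=f(\lambda)\sqrt\lambda$, and a one-line contradiction argument using strict monotonicity of $f$ (inherited from that of $f^2$ in \eqref{fphipsi}) yields $R^2\le 4\|s\|_Q^2 f^2(\lambda_0)$; since $\varrho\le\bar C\delta$ and $\Theta^{-1},f$ are increasing, $R^2\le 4\|s\|_Q^2 f^2\!\bigl(\Theta^{-1}(\tfrac{\bar C}{2\|s\|_Q}\delta)\bigr)=\bar C^2\delta^2/\Theta^{-1}(\tfrac{\bar C}{2\|s\|_Q}\delta)$, which is \eqref{rates0}.

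The main obstacle is this final inversion for a general index function $f$: one has to keep track of the domain on which $f$ --- and hence the argument $\varrho^2/R^2$ --- is admissible. For H\"older-type $f$ this domain is all of $(0,\infty)$ and $\bar\delta$ may be taken arbitrarily large, whereas for logarithmic-type $f$ one must additionally treat the case $\varrho^2/R^2\notin(0,1/e]$ (in which $R\le\sqrt e\,\varrho\le\sqrt e\,\bar C\delta$, which is of better order than the asserted rate) and restrict to $\delta\in(0,\bar\delta]$, with $\bar\delta$ depending on $\|s\|_Q$, $\bar C$ and $f$ but not on $\delta$, so that $\Theta^{-1}(\tfrac{\bar C}{2\|s\|_Q}\delta)$ is well defined. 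This bookkeeping, together with the monotonicity and convexity properties collected in \eqref{fphipsi}, is exactly what Theorem~1 in \cite{KKV10} packages; the residual estimate is already contained in \eqref{Fconv}, and the opening decomposition is the standard one.
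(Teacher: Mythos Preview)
Your approach is essentially the same as the paper's: both reduce the rate to Theorem~1 of \cite{KKV10} by feeding in the a~priori bound \eqref{ineq0} and an $O(\delta)$ residual estimate at the stopping index, the paper stating the latter as $\|F(\qhkstern)-g^\delta\|_G\le\sqrt{1+c_1}\,\tau\delta$ from \eqref{intcond}, \eqref{etacond2} and \eqref{stop}. The only difference is that you additionally unpack the short argument behind Theorem~1 of \cite{KKV10} (the decomposition via \eqref{ineq0}, insertion of \eqref{source1}, and the monotonicity inversion), whereas the paper cites it as a black box.
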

\begin{proof}
The assertion follows from Theorem 1 in \cite{KKV10} using the estimate \eqref{ineq0} and 
\begin{eqnarray*}
\|F(\qhkstern)-g^\delta\|_G&\leq&
\sqrt{I_{3,h}^{k_*}+\eta_3^{k_*}}\\
&\leq& \sqrt{1+c_1}\sqrt{I_{3,h}^{k_*}}
\leq \sqrt{1+c_1}\tau\delta\,.
\end{eqnarray*}
\end{proof}
\begin{rem}
If $f$ satisfies the condition
\beq\label{nule12}
t \;\mapsto \; \frac{f(t)}{\sqrt{t}} \;\;\mbox{ monotonically decreasing},
\eeq
then  for all $C>0$ the inequality
\beq\label{quof}
f(\Theta^{-1}(Ct)) \le \max\{\sqrt{C},1\}\,f(\Theta^{-1}(t))\qquad(t \ge 0)
\eeq
holds, which implies that we can conclude from \eqref{rates0} the optimal rates
\begin{equation}\label{rates1}
\|\qhkstern -q^\dag\|_Q\leq C\left(f^2(\Theta^{-1}(\delta))\right) 
= C\left(\frac{\delta^2}{\Theta^{-1}(\delta)}\right).
\end{equation}
The restriction \eqref{nule12} corresponds to the typical saturation phenomenon of Tikhonov regularization in
combination with the discrepancy principle {, see e.g., \cite{EHNBuch}.}
 \end{rem}

\subsection{Computation of the error estimators}\label{subsec_estimatorsIRGNM}

The computation of the error estimators $\eta_1^k$, $\eta_2^k$, $\eta_3^k$ and $\eta_4^k$ is done
similarly to \cite{GKV}. The only difference lies in the fact that in $I_1^k$ we have three
variables subject to discretization, namely $q$, $\uold$ and $w$ instead of only two ($q$ and $u$)
as usual, which leads to the following error estimators. In this section we omit the iteration
index $k$ for simplicity.

\subsubsection{Error estimator for $I_1$}\label{subsec_I1estimatorIRGNM}

Since the dependence on $\beta$ is not important for error estimation, we neglect $\beta$ as argument and
consider
\[
   I_1(q,\uold,w)=\|C'(\uold)(w)+C(\uold)-g^\delta\|_G^2 + {\frac 1 {\beta}} \|q-q_0\|_Q^2
\]
and define the Lagrange functional
\begin{align*}
   L(q,\uold,w,v,\vold)&\coloneqq  I_1(q,\uold,w)\\
   &\quad +A_u'(\qold,\uold)(w)(v)+A_q'(\qold,\uold)(q-\qold)(v)\\
   &\quad +A(\qold,\uold)(\vold)-f(\vold)\,.
\end{align*}

\begin{prop}\label{prop_I1estimator}
Let $X = Q \times V \times V \times W \times W$ and $X_h = Q_h\times V_h \times V_h \times W_h
\times W_h$. Let $x=(q,\uold,w,v,\vold)\in X$ be a stationary point of $L$, i.e.
\[
  \xh \in X_h\colon \qquad L'(x)(dx) = 0 \qquad \forall dx\in X
\]
and let $\xh=(\qh,\uoldh,\wh,\vh,\voldh)\in X_h$ be a discrete stationary point of $L$, i.e.
\beq \label{eq_statpointdiscrete}
  L'(\xh)(dx)=0 \qquad \forall dx \in X_h\,.
\eeq
Then there holds
\[
  I_1(q,\uold,w)-I_1(\qh,\uoldh,,\wh)=\frac 12 L'(\xh)(x-\tilde{x}_h)+R\,,
\]
for an arbitrary $\tilde{x}_h\in X_h$ and
\[
  R=\frac 12 \int_0^1 L'''(x_se_x)(e_x,e_x,e_x)s(s-1)\ ds
\]
with $e_x\coloneqq x-\xh$.
\end{prop}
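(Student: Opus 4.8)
The plan is to prove Proposition \ref{prop_I1estimator} by the standard Becker--Rannacher dual-weighted residual argument, applied to the Lagrangian $L$ whose stationary points encode the constrained minimization of $I_1$. First I would observe that, by construction of $L$, a stationary point $x=(q,\uold,w,v,\vold)$ of $L$ over $X$ satisfies the state equation \eqref{PDE}, the linearized equation \eqref{PDElin}, and the adjoint equations for $v,\vold$; in particular the values of $L$ and of $I_1$ coincide at $x$, i.e.\ $L(x)=I_1(q,\uold,w)$, and likewise $L(\xh)=I_1(\qh,\uoldh,\wh)$ at the discrete stationary point $\xh$. Hence the quantity to estimate is $L(x)-L(\xh)$.

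Next I would introduce $e_x\coloneqq x-\xh$ and the path $x_s\coloneqq \xh+s\,e_x=\xh+s(x-\xh)$ for $s\in[0,1]$, so that $x_0=\xh$ and $x_1=x$. Writing
\[
  L(x)-L(\xh)=\int_0^1 \frac{d}{ds}L(x_s)\,ds=\int_0^1 L'(x_s)(e_x)\,ds
\]
and applying the trapezoidal rule with exact remainder,
\[
  \int_0^1 g(s)\,ds=\tfrac12\bigl(g(0)+g(1)\bigr)+\tfrac12\int_0^1 g''(s)\,s(s-1)\,ds
\]
to $g(s)=L'(x_s)(e_x)$, gives
\[
  L(x)-L(\xh)=\tfrac12\bigl(L'(\xh)(e_x)+L'(x)(e_x)\bigr)+\tfrac12\int_0^1 L'''(x_s)(e_x,e_x,e_x)\,s(s-1)\,ds.
\]
Since $x$ is a stationary point of $L$ over all of $X$, the term $L'(x)(e_x)=0$. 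This already yields the claimed identity with $\tilde x_h=\xh$; to get the stated freedom of an arbitrary $\tilde x_h\in X_h$, I invoke Galerkin orthogonality \eqref{eq_statpointdiscrete}: $L'(\xh)(dx)=0$ for all $dx\in X_h$, so $L'(\xh)(e_x)=L'(\xh)(x-\xh)=L'(\xh)(x-\tilde x_h)$ for any $\tilde x_h\in X_h$. Collecting terms, with $R$ defined exactly as the remainder integral, completes the proof.

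The main subtlety — rather than a genuine obstacle — is the bookkeeping needed to verify that $L(x)=I_1(q,\uold,w)$ at a stationary point: one must check that stationarity of $L$ in the directions $v$ and $\vold$ reproduces exactly the constraint equations \eqref{PDE}--\eqref{PDElin}, so that the Lagrange-multiplier terms in $L$ vanish; and similarly at the discrete level, using that \eqref{PDEh}--\eqref{PDElinh} are the discretizations built into $X_h$. A secondary point to state carefully is that $L$ is cubic (at most) in its arguments only if $C$ and $A$ are sufficiently smooth — more precisely, $L'''$ must exist along the segment $x_s$ — so the identity presupposes the regularity of $F=C\circ S$, $A$ and $C$ implicit in the paper; with that in hand the trapezoidal remainder formula is just calculus and no term of order higher than three survives. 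I would also remark that the representation is exact (no approximation has been made), which is what makes it the basis for the a posteriori estimators $\eta_i$ in the subsequent sections.
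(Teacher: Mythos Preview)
Your proposal is correct and is precisely the standard Becker--Rannacher dual-weighted residual argument that the paper invokes; the paper itself gives no self-contained proof but simply cites \cite{GKV} and \cite{BeckerRannacher}, so you have spelled out exactly the argument those references contain. The only comment is cosmetic: the paper's displayed remainder $L'''(x_s e_x)$ is a typographical slip for $L'''(x_s)$, which you have read correctly.
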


\begin{proof}
cf. \cite{GKV} and \cite{BeckerRannacher}.
\end{proof}

Explicitly such stationary points can be computed by solving the equations
\begin{align}
\uold\in V\colon \qquad & A(\qold,\uold)(d\vold)=f(d\vold)\quad \forall d\vold\in W\label{eq:I1equation1}\\
w\in V: \qquad & A_u'(\qold,\uold)(w)(dv) = -A_q'(\qold,\uold)(q-\qold)(dv)\quad \forall dv\in W\\
v\in W:\qquad & A_u'(\qold,\uold)(dw)(v) = 
-I_{1,w}'(q,\uold,w)(dw) 
\quad \forall dw\in V\\
\vold\in W:\qquad & A_u'(\qold,\uold)(du)(\vold) =
   -I_{1,\uold}'(q,\uold,w)(du)\\
   &\hspace*{39mm} - A_{uu}''(\qold,\uold)(w,du)(v)\\
   & \hspace*{39mm}-A_{qu}''(\qold,\uold)(q-\qold,du)(v)\quad \forall du\in V\\
q\in Q:\qquad & 
I_{1,q}'(q,\uold,w)(dq) 
= - A_q'(\qold,\uold)(dq)(v)\quad \forall dq\in Q\,.\label{eq:I1equation5}
\end{align}

%
and their discrete counterparts.

Obviously, we do not actually compute continuous stationary points, but (as in \cite{GKV}) we
choose $\tilde x_h=i_h x$ with a suitable interpolation operator
$i_h\colon  X \to X_h$ and approximate the interpolation error using an operator
$\pi_h\colon  X_h\to\tilde X_h$ with $\tilde X_h\neq X_h$, such that $x-\pi_h x_h$ has a better local
asymptotical behavior than $x-i_h x$. Then the error estimator $\eta_1$ for $I_1$ can be computed as
\[
 I_1 - I_{1,h} =   I_1(q,\uold,w)-I_1(\qh,\uoldh,\wh)  \approx  \frac 12 L'(\xh)(\pi_h\xh-\xh) = \eta_1
\]
(cf. \cite{BeckerRannacher}).

{
\begin{rem}
 Please note that the equations \eqref{eq_statpointdiscrete} /\eqref{eq:I1equation1}-\eqref{eq:I1equation5} are solved anyway in the process of solving the optimization problem 
 \eqref{eq_IRGNM_optprob}-\eqref{PDElin}.
\end{rem}
}
\subsubsection{Error estimator for $I_2$}\label{subsec_I2estimatorIRGNM}

We consider
\[
   I_2(\uold,w)= \|C'(\uold)(w)+C(\uold)-g^\delta\|_G^2
\]
and for $x_1\coloneqq (q_1,\uoldeins,w_1,v_1,\voldeins) \in X$ we define the Lagrange functional
\[
   M(x,x_1)\coloneqq  I_2(\uold,w)+L'(x)(x_1)\,.
\]
Then there holds a similar result to Proposition \ref{prop_I1estimator} for the difference
$I(\uold,w)-I(\uoldh,\wh)$ for stationary points $y=(x,x_1)\in X\times X$ and
$\yh=(\xh,\xeinsh)\in X_h\times X_h$ of $M$ (cf.~\cite{BeckerVexler}). Such a discrete stationary point $y_h$ can
be computed by solving the equations \eqref{eq_statpointdiscrete}/\eqref{eq:I1equation1}-\eqref{eq:I1equation5}  and
\beq\label{eq_statpointM}
  \xeinsh\in X_h\colon \qquad L''(\xh)(\xeinsh,dx) = -I_{2,w}'(\uoldh,\wh)(dw)-I_{2,\uold}'(\uoldh,\wh)(d\uold) \qquad
\forall   dx\in X_h\,,
\eeq
(where $dx=(dq,d\uold,dw,dv,d\vold)$). The error estimator $\eta_2$ for $I_2$ can then be computed by
\[
  I_2-I_{2,h} = I_2(\uold,w)-I_2(\uoldh,\wh)\approx \frac 12 M'(\yh)(\pi_h\yh-\yh) = \eta_2\,.
\]
{
\begin{rem}
 To avoid the computation of second order information in \eqref{eq_statpointM} we would like to refer to \cite{BeckerVexler}, where 
 \eqref{eq_statpointM} is replaced by an approximate equation of first order. 
\end{rem}
}
\subsubsection{Error estimator for $I_3$}\label{subsec_I3estimatorIRGNM}

For $I_3$ we again proceed similarly to the sections \ref{subsec_I1estimatorIRGNM} and
\ref{subsec_I2estimatorIRGNM}, i.e. we consider
\[
   I_3(\uold)= \|C(\uold)-g^\delta\|_G^2
\]
and define the Lagrangian
\[
   N(x,x_2)\coloneqq I_3(\uold)+L'(x)(x_2)\,.
\]
As there holds again a similar results to Proposition \ref{prop_I1estimator}, we compute
a discrete stationary point $\chi_h=(\xh,\xzweih)\in X_h\times X_h$ of $N$
by solving the equations \eqref{eq_statpointdiscrete}/\eqref{eq:I1equation1}-\eqref{eq:I1equation5}  and
\beq\label{eq_statpointN}
\xzweih\in X_h\colon \qquad L''(\xh)(\xzweih,dx) = - I_3'(\uoldh)(d\uold) \qquad \forall dx\in X_h\,,
\eeq
and compute the error estimator for $I_3$ as
\[
  I_3-I_{3,h}= I_3(\uold)-I_3(\uoldh) \approx  \frac 12 N'(\chi_h)(\pi_h \chi_h-\chi_h) = \eta_3\,.
\]


\subsubsection{Error estimator for $I_4$}\label{subsec_I4estimatorIRGNM}

{
Different to the other error estimates, the bound on the error in $I_4$ only appears in connection with the very weak assumption
$\eta_4^{k}\to 0$ as $k\to\infty$, which may be satisfied in practice without refining explicitly with respect to $\eta_4$, but simply, 
by refining with respect to the other error estimators $\eta_1,\eta_2$, and especially $\eta_3$. Another way to make sure that $\eta_4^{k}\to 0$ as $k\to\infty$, 
is, of course, to refine globally every now and then, although this is admittedly, not a very efficient solution.

If one doesn't want to rely on such practically motivated speculations and actually wants to compute an error estimator for $I_4$, one has to   
include the decoupled constraint
\[
  A(q,u)(v)=f(v)\qquad \forall v\in W
\]
in the definition of the Lagrangian $L$ in subsection \ref{subsec_I1estimatorIRGNM}. In that case we 
redefine the Lagrange functional $L$ in subsection \ref{subsec_I1estimatorIRGNM} as
\begin{align*}
   L(q,\uold,w,v,\vold,u,z)&\coloneqq  I_1(q,\uold,w)\\
   &\quad +A_u'(\qold,\uold)(w)(v)+A_q'(\qold,\uold)(q-\qold)(v)\\
   &\quad +A(\qold,\uold)(\vold)-f(\vold)\\
   &\quad +A(q,u)(z)-f(z)\,.
\end{align*}
and the spaces $X\coloneqq Q\times V\times V\times W\times W\times V \times W$ and $X_h\coloneqq Q_h\times
V_h\times V_h\times W_h\times W_h\times V_h \times W_h$. 
Then we consider
\[
   I_4(u)\coloneqq \|C(u)-g^\delta\|_G^2
\]
and define the auxiliary Lagrange functional
\[
   K(x,x_3)\coloneqq I_4(u)+L'(x)(x_3)
\]
for $x,x_3\in X$. Then again (as in the subsections \ref{subsec_I1estimatorIRGNM},
\ref{subsec_I2estimatorIRGNM} and \ref{subsec_I3estimatorIRGNM}) we could estimate the difference $I_4(u) -
I_4(\uh)$ by computing a discrete stationary point $\xi_h = (x_h,\xdreih)$ of $K$, that means we would solve 
he equations
\eqref{eq_statpointdiscrete}/\eqref{eq:I1equation1}-\eqref{eq:I1equation5}  and
\[
  \xdreih \in X_h\colon \qquad L''(\xh)(\xdreih,dx) = -I'_4(\uh)(du) \forall dx=(dq,du,dz)\in X_h\,,
\]
with $X_h\coloneqq Q_h\times V_h\times V_h\times W_h\times W_h\times V_h \times W_h$ and compute the error estimator
 $\eta_4$ for $I_4$ by
\[
  I_4-I_{4,h} = I_4(u)-I_4(\uh) \approx \frac 12 K'(\xi_h)(\pi_h \xi_h-\xi_h) = \eta_4\,.
\]
}

\subsection{Algorithm}\label{subsec_algorithm}

As mentioned and justified in Subsection \ref{subsec_I4estimatorIRGNM}, we 
neglect $\eta_4^k$ and the condition $\eta_2^k\to 0$ and $\eta_3^k\to 0$ as $k\to\infty$ from
\eqref{etacond2} in the following algorithm. 

In order to verify the condition \eqref{etacond1} more easily, we split \eqref{etacond1} into 
\be{(A)}
  \eta_1^k \le c_4 I_{3,h}^k \quad \text{with} 
  \quad c_4 = \frac 12 \left({\tilde{\underline\theta}} - 
    2 \left(c_{tc}^2 + \frac{(1+c_{tc})^2}{\tau^2}\right)\right)
\ee
and 
\be{(C1)}
  \eta_3^k \le c_5 I_{3,h}^k\quad \text{with} 
  \quad c_5 = \frac 1 {4c_{tc}^2}  \left({\tilde{\underline\theta}} - 
    2 \left(c_{tc}^2 + \frac{(1+ c_{tc})^2}{\tau^2}\right)\right)\,.
\ee

Additionally we combine the inequality in \eqref{etacond2}, 
the first inequality in \eqref{etacond3} and \eqref{(C1)}, since there holds 
\[
  I_{3,h}^k \le I_3^k+\eta_3^k \quad \text{ and } \quad I_{4,h}^{k-1} \ge I_4^{k-1} - \eta_4^{k-1}\,,  
\]
such that the condition
\be{eta3eta4condsimple}
  \eta_3^k + (1+c_3)\eta_4^{k-1} \le (1+c_3) I_4^{k-1} - I_3^k + r^k
\ee
implies the first inequality in \eqref{etacond3}.
As mentioned in Remark \ref{remI3I4}, $I_3^k$ and $I_4^{k-1}$ and  $I_{3,h}^k$ and $I_{4,h}^{k-1}$ only differ in the discretization level, 
which motivates the assumption that for small $h$, we have $I_3^k \approx I_4^{k-1}$ and $\eta_4^{k-1} \approx \eta_3^k$, such that 
instead of \eqref{eta3eta4condsimple} we check whether
\be{eta3condsimple}
  \eta_3^k \le \frac {c_3} {2(1+c_3)} I_{3,h}^k + \frac {r^k} {2(1+c_3)} \,.  
\ee
Thus, as a combination of the inequality in \eqref{etacond2}, \eqref{eta3condsimple}
and \eqref{(C1)}, we formulate
\be{(C)}
  \eta_3^k \le \min\left\{c_1,c_5,\frac {c_3}{2(1+c_3)}\right\} I_{3,h}^k\,.
\ee

\begin{algorithm}{Reduced form of discretized IRGNM}\label{alg_IRGNM}
  \begin{algorithmic}[1]
    \STATE Choose $\tau$, $\tau_\beta$, $\tilde\tau_\beta$, $\uttheta$, $\ottheta$ such that $ 0 < \uttheta \le \ottheta <1$
     and \eqref{taucond} holds , $\tilde\theta = (\uttheta + \ottheta) / 2$ 
         and $\max\{1\,,{\tilde\tau}_\beta\} <\tau_\beta\le \tau$ 
and choose the constants $c_1$, $c_2$ and $c_3$, such that the second part of
	  \eqref{etacond3} is fulfilled. 
    \STATE Choose a discretization $h=h_0$ and starting value $\qhnull = \qhnullnull$ 
          (not necessarily coinciding with $q_0$ in the regularization term)
	  and set $\qoldnull= \qhnullnull$.
    \STATE Determine $\uoldnull = \uoldhnullnull$, $I_{3,h}^0 = I_{3,{h_0}}^0$ 
	and $\eta_{3}^0 = \eta_{3,h_0}^0$
	   by applying Algorithm \ref{alg_I3Evaluation} with $m=0$ (and $h=h_0$).
    \STATE  Set $h^1_0 = h_0$.
    \WHILE {\eqref{(C)} is violated} 
      \STATE Refine grids according to the error estimator $\eta_3^0$, such that we obtain a finer discretization
      $h^1_0$.
      \STATE Determine $\uoldnull = \uoldheinsnullnull$, $I_{3,h}^0 = I_{3,{h^1_0}}^0$ and 
      $\eta_{3}^0 = \eta_{3,h^1_0}^0$
	   by applying Algorithm \ref{alg_I3Evaluation} with $h=h^1_0$ and $m=0$.
    \ENDWHILE
     \STATE Set $k=0$ and $h=h^1_0$ (possibly different from $h_0$).
     
    \WHILE {$I_{3,h}^k\ge \tau^2\delta^2$}
	   \STATE Set $h = h^1_k$. 
           \STATE 
		With $\qoldk$, $\uoldk$ fixed, apply Algorithm \ref{alg:GKVAlgorithm} 
		starting with the current mesh $h (= h^1_k)$		
		to obtain a regularization parameter $\beta_k$ and a possibly different discretization $h^2_k$
 		such that \eqref{inexNewton} holds and the corresponding $\whk = \whzweik$, $\qhk = \qhzweik$.
 		\STATE Set $h=h^2_k$.
		\STATE Evaluate error estimator $\eta_1^k = \eta_1^k(h^2_k)$.
		\STATE Set $h^3_k = h^2_k$.
		\WHILE {\eqref{(A)} is violated} 
		  \STATE Refine grids according to the error estimator $\eta_1^k$, such that we obtain 
		  a finer discretization $h^3_k$.
		  \STATE Set $h=h^3_k$.
		  \STATE With $\qoldk$ and $\uoldk$ fixed, determine $\qhk = \qhdreik$ and $\whk = \whdreik$
		  by solving \eqref{optprobinalg} 
		\ENDWHILE
		\STATE Determine $\uhk = \uhdreik \in V_h=V_{h^3_k}$ by solving
		    \[
			A(q^k_{h},u^k_{h})(v)=f(v) \qquad \forall v\in W_{h}\,.
		    \]
          	  \STATE Set $\qoldkplus=\qhk(=\qhdreik)$ and 
          	  $\uoldkplus (= \uoldhdreikplus)=\uhk(=\uhdreik)$.
          	  \STATE Evaluate $I_{3,h}^{k+1} = I_{3,h^3_k}^{k+1}$ according to \eqref{eq_IRGNM_I1234kdiscrete} 
		  and the error estimator $\eta_{3}^{k+1} = \eta_{3}^{k+1}({h^3_k})$.
		  Set $h^4_k=h^3_k$.
		  \WHILE {\eqref{(C)} is violated}
		    \STATE Refine grid according to the error estimator $\eta_3^{k+1}$, such that we obtain a finer 
		    discretization $h^4_k$.
		    \STATE Determine $\uoldkplus = \uoldhvierkplus$, 
		    $I_{3,h}^{k+1} = I_{3,{h^4_k}}^{k+1}$ and $\eta_{3}^{k+1} = \eta_{3,h^4_k}^{k+1}$
		    by applying Algorithm \ref{alg_I3Evaluation} with $m=k+1$ and $h=h^4_k$.
		  \ENDWHILE
		  \STATE Set $h^1_{k+1} = h^4_k$  (i.e. use the current mesh as a starting mesh for the next iteration)
		  \STATE Set $k=k+1$
	\ENDWHILE
\end{algorithmic}
\end{algorithm}

\begin{algorithm}{Evaluation of $I_3^m$}\label{alg_I3Evaluation}
  \begin{algorithmic}[1]
    \STATE Determine
	 \[
	    \uold^m \in V_h: 
	    \qquad A(\qold^m,\uold^m)(v) = f(v) \qquad \forall v\in W_h\,.
	 \]
    \STATE Evaluate $I_{3,h}^m$ according to \eqref{eq_IRGNM_I1234kdiscrete}. 
    \STATE Evaluate error estimator $\eta_3^m$.
\end{algorithmic}
\end{algorithm}

\begin{algorithm}{Inexact Newton method for the determination of a regularization parameter for the IRGNM subproblem from \cite{GKV}}\label{alg:GKVAlgorithm}
  \begin{algorithmic}[1]
    \STATE Set $\delta_\beta = \sqrt{\tilde\theta I_{3,h}^{k-1}} / \tau_\beta$.    
    \STATE Compute a Lagrange triple $\xh=(\qh,\wh,\zh)$ to 
		    \be{optprobinalg}
			\min_{(q,w)\in Q_{h} \times V_{h}}
			\|C'(\uoldk)(w)+C(\uoldk)-g^\delta\|_G^2+{\frac 1 {\beta_k}}\|q-q_0\|_Q^2
		    \ee
		    \[
		      \text{s.t.}\qquad A'_u(\qoldk,\uoldk)(w,v)+A'_q(\qoldk,\uoldk)(q-\qoldk)(v)
		      + A(\qoldk,\uoldk)(v) - f(v)=0
		      \qquad \forall v\in W_{h}\,.
		    \]
    \STATE Evaluate $i_{h} = I_{h,2}^k = \|C'(\uoldk)(w_h)+C(\uoldk)-g^\delta\|_G^2$.
    \WHILE{$i_{h} > (\tau_\beta^2+\frac {\tilde \tau_\beta^2} 2)  \delta_\beta^2$}
    \STATE Evaluate $i'_{h}$ (cf. \cite{GKV}).
    \STATE Evaluate error estimator for $i(\beta) = I(w(\beta))$ with 
	  $I\colon  w\mapsto I_2(\uoldk,w)$ (cf. \cite{GKV}).
    \STATE Evaluate error estimator for $i'(\beta) = \frac {d}{d\beta} I(w(\beta))$ (cf. \cite{GKV}).
    \WHILE {accuracy requirements (cf. \cite{GKV}) are violated}
    	\STATE  Refine with respect to the corresponding error estimator. \label{step_GKV_refine1}
        \STATE Compute a Lagrange triple $\xh=(\qh,\wh,\zh)$ to \eqref{optprobinalg}.
        \STATE Evaluate $i_{h} = I_{h,2}^k = \|C'(\uoldk)(w_h)+C(\uoldk)-g^\delta\|_G^2$.
        \STATE Evaluate $i'_{h}$ (cf. \cite{GKV}).
        \STATE Evaluate error estimator for $i(\beta) = I(w(\beta))$ with 
	  $I\colon  w\mapsto I_2(\uoldk,w)$ (cf. \cite{GKV}).
        \STATE Evaluate error estimator for $i'(\beta) = \frac {d}{d\beta} I(w(\beta))$ (cf. \cite{GKV}).
    \ENDWHILE
    \STATE Update $\beta$ according to an inexact Newton method (cf. \cite{GKV}) $\beta\leftarrow\beta-\frac{i_h}{i'_h}$.
    \STATE Compute a Lagrange triple $\xh=(\qh,\wh,\zh)$ to \eqref{optprobinalg}.
    \STATE Evaluate $i_{h} = I_{h,2}^k = \|C'(\uoldk)(w_h)+C(\uoldk)-g^\delta\|_G^2$.
    \ENDWHILE
\end{algorithmic}
\end{algorithm}

\begin{rem}\label{rem:algoGKV}
Algorithm \ref{alg:GKVAlgorithm} corresponds to the algorithm from \cite{GKV}
with the following replacements:\\
\begin{tabular}{l|l}
	in \cite{GKV} & here\\
	\hline
	$q$ & $q-q_0$\\
	$T$ & $F'(\qoldk)$\\
	$g^\delta$ & $g^\delta-F(\qoldk)+F'(\qoldk)(\qoldk)$\\
	$\tau^2\delta^2$ & $\tilde\theta I_{3,h}^k$\\
	$(\tau-\tilde{\tau})^2\delta^2$ & ${\tilde{\underline\theta}} I_{3,h}^k$\\
	$(\tau+\tilde{\tau})^2\delta^2$ & ${\tilde{\overline\theta}} I_{3,h}^k$
\end{tabular}
\end{rem}


With respect to loops and the solution of PDEs and optimization problems, the algorithm has the following form. 
(We do not display the refinement loops on lines 5, 15, 22 of Algorithm \ref{alg_IRGNM} and on line 8 of Algorithm \ref{alg:GKVAlgorithm} but only the iteration loops.)
\begin{algorithm}{Loops in reduced form of discretized IRGNM}\label{alg_loopsIRGNM}
  \begin{algorithmic}[1]
    \WHILE{$\cdots$ (Newton iteration)}
      \STATE Apply algorithm from \cite{GKV}, i.e.
      \WHILE{$\cdots$ (Iteration for $\beta_k$)}
	  \STATE Solve linear-quadratic optimization problem (i.e. solve linear PDE).
	\STATE Update $\beta$ and refine eventually.
      \ENDWHILE
      \STATE Solve nonlinear PDE.
    \ENDWHILE
 \end{algorithmic}
\end{algorithm}

In contrast with the nonlinear Tikhonov method 
\[
\min_{q\in Q}\|F(q)-\gdel\|_G^2+\frac{1}{\beta} \|q-q_0\|_Q^2\,.
\]
investigated in \cite{KKV10} (cf. algorithm \ref{alg_loopsINMnonlinear} below), we have one additional loop, but we
only have to solve a linear-quadratic optimization problem instead of a nonlinear problem. On the other hand,
we still have to solve 
(at least) one nonlinear PDE 
in each outer loop. For this reason we doubt whether algorithm \ref{alg_IRGNM} pays off
with respect to computation time as compared to the method in \cite{KKV10}. Therefore we do  not implement this algorithm, but consider more efficient modifications in \cite{KKV13} {(part II of this paper)}.

\begin{algorithm}{Loops in Inexact Newton Method (for nonlinear problems)}\label{alg_loopsINMnonlinear}
  \begin{algorithmic}[1]
    \WHILE{$\cdots$ (Iteration for $\beta$)}
	\STATE Solve nonlinear optimization problem (i.e. solve nonlinear PDE).
      \STATE Update $\beta$ and refine eventually.
    \ENDWHILE    
 \end{algorithmic}
\end{algorithm}

\subsection{Extension to more general data misfit and regularization terms}\label{subsec_generalize}

Motivated by the increasing use of nonquadratic, non-Hilbert space misfit and regularization terms for modelling, e.g., sparsity of the solution, or non-Gaussian data noise (cf., e.g., \cite{PoeschlDiss,Flemming2010} for Tikhonov regularization, and \cite{HohageWerner} for the IRGNM), we now extend our results to a more general setting. 
To this purpose we consider a more general version of \eqref{eq_IRGNM_minJ1Discrete}:
\be{eq_IRGNM_minJ1DiscreteGeneral}
\min_{q \in Q} 
\mathcal{T}_\beta (q)\coloneqq \cS(F'(\qoldk)(q- \qoldk)+F(\qoldk),g^\delta)+{\frac  1 \beta}\cR(q)
\eeq 
with quantities of interest (cf. \eqref{eq_IRGNM_I1234})
\beq \label{eq_IRGNM_I1234kGeneral}
\begin{aligned}
\Ieinsk & \coloneqq  \cS(F'(\qoldk)(q^k- \qoldk)+F(\qoldk),g^\delta)
			+{\frac  1 \beta}\cR(q^k)\\ 
\Izweik & \coloneqq  \cS(F'(\qoldk)(q^k- \qoldk)+F(\qoldk),g^\delta)\\
\Idreik &\coloneqq  \cS(F(\qoldk),g^\delta)\\
\Ivierk & \coloneqq  \cS(F(q^k),g^\delta)
\end{aligned}
\eeq
and its discrete counterparts (cf. \eqref{eq_IRGNM_minJ1Discretereduced})
\be{eq_IRGNM_minJ1DiscretereducedGeneral}
\min_{q \in Q_h} 
\mathcal{T}_{\beta,h}(q) \coloneqq  \cS(F_h'(\qoldk)(q- \qoldk)+F_h(\qoldk),g^\delta)+{\frac  1 \beta}\cR(q)
\eeq
with
\beq \label{eq_IRGNM_I1234kdiscreteGeneral}
\begin{aligned}
\Ieinshk & \coloneqq  \cS(F_{h_k}'(\qoldk)(q^k_{h_k}- \qoldk)+F_{h_k}(\qoldk),g^\delta)
			+{\frac  1 \beta}\cR(q^k_{h_k})\\ 
\Izweihk & \coloneqq  \cS(F_{h_k}'(\qoldk)(q^k_{h_k}- \qoldk)+F_{h_k}(\qoldk),g^\delta)\\
\Idreihk &\coloneqq  \cS(F_{h_k}(\qoldk),g^\delta)\\
\Ivierhk & \coloneqq  \cS(F_{h_k}(q^k_{h_k}),g^\delta)
\end{aligned}
\eeq
(cf. \eqref{eq_IRGNM_I1234kdiscrete}).

The data misfit and regularization functionals $\cS$ and $\cR$ should satisfy 
\begin{ass}\label{ass_IRGNM_SRass}
Let $\cS\colon G\times G \to \R $ and $\cR\colon Q\to\R$ have the following properties: 
\begin{enumerate}
\item The mapping $y\mapsto \cS(y ,g^\delta)$ is convex. \label{item_IRGNM_Sconvex}
\item $\cS$ is symmetric, i.e. $\cS(y,\tilde{y})=\cS(\tilde{y},y)$ for all $y,\tilde y \in G$. \label{item_IRGNM_Ssymm}
\item $\cS$ is positive definite, i.e. $\cS(y,\tilde{y})\geq0$ and $\cS(y,y) = 0$ for all $y,\tilde y\in G$. \label{item_IRGNM_Sdef}
\item For all $y,\tilde y, \widehat y \in G$ there exists a constant $c_{\cS}$ such that
$\cS(y,\tilde{y})\leq c_{\cS}(\cS(y,\hat{y})+\cS(\hat{y},\tilde{y}))$. \label{eq_IRGNM_Striangleineq}
\item The regularization operator $\cR$ is proper (i.e. the domain of $\cR$ is non-empty)
 and convex. \label{eq_IRGNM_Pproperconvex}
\end{enumerate}
\end{ass}
where the domain of an operator ${\cal R}\colon  M \to \R $ should be understood as
\[
	\mathcal{D}({\mathcal R})\coloneqq \{m\in M|\ {\cal {R}}(m) \neq\infty\}\,.
\]

\begin{rem}
In fact, it suffices to require $\cS(y,y) = 0$ only for $y=g$, i.e. for the exact data in Item \ref{item_IRGNM_Sdef} in Assumption \ref{ass_IRGNM_SRass}, but 
since Item \ref{item_IRGNM_Sdef} is a more ''natural`` assumption in terms of general operator properties, we stick with the stronger assumption 
 Item \ref{item_IRGNM_Sdef}.
\end{rem}

We refer once more to \cite{HohageWerner} where convergence and convergence rates for the IRGNM have already been established in an even more general (continuous) setting and mention that we here consider a somewhat simpler situation with stronger assumptions
on $\cS$, $\cR$, since our main intention is to demonstrate transferrability of the adaptive discretization concept. Moreover note, that we rely on a different choice of the regularization parameter here.
The results obtained here will allow us to easily establish convergence rates results for an exact penalty formulation of an all-at-once  formulation of the IRGNM in \cite{KKV13} {(part II of this paper)}.

Although we will, again, restrict ourselves to Hilbert spaces in the next sections, at this point
we discuss convergence in a Banach space setting to emphasize the generality of the subsequent results. To this purpose we
introduce the \emph{Bregman distance} 
\be{eq_IRGNM_bregmandistance} 
D_\cR^\xi(q,\ol q)
\coloneqq \cR(q)-\cR(\ol q)-\langle \xi,q-\ol q\rangle_{Q^*,Q}
\ee
with some $\xi \in \partial\cR(q)\subset Q^*$, which coincides with 
$\frac12 \|q-q^\dagger\|_Q^2$ for $\cR(q)=\frac12 \|q-q_0\|_Q^2$ and $\xi = q^\dagger-q_0$ in a Hilbert space $Q$.

Well-definedness (i.e. for every $\gdel \in G$ and $\beta_k>0$ there exists a solution $q_{h_k}^k$ to \eqref{eq_IRGNM_minJ1DiscretereducedGeneral})
and stable dependence on the data (i.e. for every fixed $\beta_k>0$ the solution $q_{k}^{h_k}$ depends continuously on $\gdel$) can be shown under
the following assumptions (cf., e.g., Assumption 1.32 in \cite{PoeschlDiss} or Remark 2.1 in \cite{HohageWerner})
\begin{ass}
\label{as:ip:bregman}
\begin{enumerate}
\item $Q$ and $G$ are Banach spaces, with which there are associated topologies $\tau_Q$ and
      $\tau_G$, which are weaker than the norm topologies.
\item The mapping $y\mapsto \cS(y ,g^\delta)$ is sequentially lower semi-continuous with respect to $\tau_G$.
\item $F'(\qoldk)\colon  Q \rightarrow G$ is continuous with respect to the topologies
      $\tau_Q$ and $\tau_G$.
\item ${\cal R} \colon  Q \to [0,+\infty]$ is proper, convex and $\tau_Q$-lower semi continuous.
\item ${\cal D}\coloneqq \mathcal{D}(F) \cap \mathcal{D}(\cR) \neq \emptyset$ is closed with respect to $\tau_Q$.
\item For every $C > 0$ the set
      \begin{equation}
      \label{ip:m}
      {\cal C}(C)\coloneqq \{ q \in {\cal D} \colon  \cR(q) \leq C\}\,,
      \end{equation}
      is $\tau_Q$-sequentially compact in the following sense:
      every sequence $(q_n)_{n\in\N}$ in ${\cal C}(C)$
      has a subsequence, which is convergent in $Q$ with respect to the $\tau_Q$-topology.
\end{enumerate}
\end{ass}

For well-definedness of the a posteriori chosen regularization parameter $\beta_k$ we refer to Lemma 1 and
Theorem 3 in \cite{KSS09}.

\begin{rem}\label{remHilbertspacecase}
For Hilbert spaces $Q$ and $G$ and the choice 
$S(y,\tilde y)\coloneqq \frac12\normGklein{y-\tilde y}^2$ and $\cR(q)\coloneqq \frac12\|q-q_0\|_Q^2$ 
Assumption \ref{ass_IRGNM_SRass} and Assumption \ref{as:ip:bregman}
are obviously fulfilled. {As for examples in a real Banach spaces setting, we refer to \cite{KSS09,KH10,PoeschlDiss}.} 
\end{rem}

Consistently the conditions \eqref{Fseqclosed} and \eqref{tangcone} on $F$ are generalized to
the following two assumptions. 

\begin{ass}\label{FseqclosedS}
Let the reduced forward operator $F$ be continuous with respect to $\tau_Q$,$\tau_G$ and satisfy
\[
    (q_n\stackrel{\tau_Q}{\to} q \ \wedge \ \cS(F(q_n),g)\to 0\ \Rightarrow \
    (q\in \mathcal{D}(F) \ \wedge \ F(q)=g)\\
\]%
for all $(q_n)_{n\in\N}\subseteq Q$\,.
\end{ass}

\begin{ass}\label{tangconeS}
Let the generalized tangential cone condition
\[
    \cS(F(q),F(\bar{q})+F'(q)(q-\bar{q}))\leq c_{tc}^2\cS(F(q),F(\bar{q}))
\]
hold for all $q,\bar{q}\in Q$ in a neighborhood 
of $q_0$ 
for some $0<c_{tc}<1$.
\end{ass}

Moreover, the source condition \eqref{source} is replaced by Assumption \ref{source2}.

\begin{ass}\label{source2}
Let the multiplicative variational inequality 
\[
  |\langle \xi, q -q^\dag\rangle_{Q^*,Q}|
  \le c D_\cR^\xi(q,q^\dag)^{1/2} 
  f\left( \frac{\cS(F(q),F(q^\dag))}{D_\cR^\xi(q,q^\dag)}\right)
\]
for all $q\in \mathcal{D}(F)$ hold \,.
  \end{ass}

Based on this groundwork, we can now formulate a convergence theorem similar to 
Theorem \ref{conv}:

 \begin{thm}\label{convS}
 Let Assumption \ref{as:ip:bregman} be satisfied, let $q^\dag \in \mathcal{B}_{\rho}(q_0)$ be a solution
 to \eqref{OEq} and let
$F$ be continuous and satisfy Assumption \ref{FseqclosedS}, Assumption \ref{tangconeS} with $c_{tc}$ sufficiently small.
Let, further, $\tau>0$ be chosen sufficiently large  such that
\beq\label{taucondS} 
c_{\cS}\left(c_{\cS}c_{tc}^2 + \frac{1+c_{\cS} c_{tc}^2}{\tau^2}\right) < \tilde{\underline\theta}
\quad \textrm{and}\quad 
0<\frac{c_{\cS}\tilde{\overline\theta} + c_{\cS}^2c_{tc}^2}{1-c_{\cS}^2 c_{tc}^2} < 1
\eeq
and let
\be{noiseS}
  \cS(g,g^\delta)\le \delta^2\,.
\ee
Finally, let for the discretization error with respect to the quantities of interest 
\eqref{eq_IRGNM_I1234kGeneral}, \eqref{eq_IRGNM_I1234kdiscreteGeneral}
estimates \eqref{intcond} hold, 
where $\eta_1^k$, $\eta_2^k$, $\eta_3^k$, $\eta_4^k$ are selected such that
\beq\label{etacond1S} 
\eta_1^{k}+c_{\cS}^2 c_{tc}^2 \eta_3^{k}\leq
\left({\tilde{\underline\theta}} - 
c_{\cS} \left(c_{\cS}c_{tc}^2 + \frac{1+c_{\cS} c_{tc}^2}{\tau^2}\right)\right) I_{3,h}^{k}
\eeq
as well as 
\eqref{etacond2}, the first part of \eqref{etacond3} and 
\be{etacond3S}
  (1+c_3) \frac {c_{\cS} \tilde{\ol \theta} + c_{\cS}^2c_{tc}^2}{1-c_{\cS}^2c_{tc}^2} \le c_2 <1
\ee
hold for some constants $c_1, c_2, c_3
>0$, and a sequence $r^k\to0$ as $k\to\infty$, where \eqref{etacond3S} is possible due to the right inequality in \eqref{taucondS}.

Then with $\beta_k$ and $h=h_k$ fulfilling \eqref{inexNewton} and $k_*$ selected according to \eqref{stop} there holds
\begin{itemize}
\item[(i)] 
For any solution $q^\dagger \in {\cal B}_{\rho}(q_0)$ of \eqref{OEq}
\beq\label{qhkstqdagS}	
   \cR(\qhk) \le \cR(q^\dagger) \quad \forall k<k_*\,,
\eeq
\item[(ii)] $k_*$ is finite,
\item[(iii)] $\qoldkstern = \qhkminuseinsstern$ converges (weakly) subsequentially to a solution of \eqref{OEq} as $\delta\to0$
in the sense that it has a $\tau_Q$ convergent subsequence and  each $\tau_Q$ 
convergent subsequence converges to a solution of \eqref{OEq}.
If the solution $q^\dagger$ to \eqref{OEq} is unique, then $\qoldkstern$ converges 
with respect to $\tau_Q$ to $q^\dagger$ as $\delta\to0$.
\end{itemize}
\end{thm}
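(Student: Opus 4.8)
\textbf{Proof plan for Theorem \ref{convS}.}

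The plan is to mirror the three-part structure of the proof of Theorem \ref{conv}, replacing the Hilbert-space tools by their Banach-space surrogates: squared norms become $\cS(\cdot,\cdot)$ and $\cR(\cdot)$, the elementary inequality $(a+b)^2\le 2a^2+2b^2$ becomes the quasi-triangle inequality from Item \ref{eq_IRGNM_Striangleineq} of Assumption \ref{ass_IRGNM_SRass}, and the concluding orthogonality argument for strong convergence becomes a Bregman-distance argument. Throughout I would use, exactly as before, the identity $I_{1,h}^k = I_{2,h}^k + \frac1{\beta_k}\cR(\qhk)$ (valid because $\cR$ is evaluated exactly), the inexact Newton bounds \eqref{inexNewton}, the stopping rule \eqref{stop}, and the discretization bounds \eqref{intcond}.

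For part (i), I would start from minimality of the continuous iterate $\qk$ in all of $Q$ to get $I_{1,h}^k \le I_1^k + \eta_1^k$, where $I_1^k$ is evaluated at $q^\dagger$, i.e. $I_1^k = \cS(F'(\qoldk)(q^\dagger-\qoldk)+F(\qoldk),g^\delta) + \frac1{\beta_k}\cR(q^\dagger)$. The core estimate is then the analogue of \eqref{estTaylor}: bound $\cS(F'(\qoldk)(q^\dagger-\qoldk)+F(\qoldk),g^\delta)$ by first applying Item \ref{eq_IRGNM_Striangleineq} with intermediate point $F(q^\dagger)$, then the generalized tangential cone condition (Assumption \ref{tangconeS}) to replace $\cS(F(q^\dagger)+F'(\qoldk)(q^\dagger-\qoldk),\,\cdot\,)$-type terms, then Item \ref{eq_IRGNM_Striangleineq} again with intermediate point $g$ together with the noise bound \eqref{noiseS} and the definition of $k_*$ (so that $\cS(F(\qoldk),g^\delta)=I_3^k \le I_{3,h}^k+\eta_3^k$ and $\delta^2$ is controlled by $I_{3,h}^k/\tau^2$). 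Collecting the constants should yield $\cS(F'(\qoldk)(q^\dagger-\qoldk)+F(\qoldk),g^\delta) \le c_{\cS}\!\left(c_{\cS}c_{tc}^2 + \frac{1+c_{\cS}c_{tc}^2}{\tau^2}\right) I_{3,h}^k + c_{\cS}^2 c_{tc}^2\eta_3^k$. Combined with $I_{1,h}^k \ge \tilde{\underline\theta} I_{3,h}^k + \frac1{\beta_k}\cR(\qhk)$ and condition \eqref{etacond1S}, the $I_{3,h}^k$ terms cancel and $\frac1{\beta_k}\cR(\qhk) \le \frac1{\beta_k}\cR(q^\dagger)$ gives \eqref{qhkstqdagS}.

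For part (ii), the recursion on $I_{4,h}^k$ is obtained exactly as in \eqref{estI4_1}--\eqref{estI4_2}: use Item \ref{eq_IRGNM_Striangleineq} to split $\cS(F(\qk),g^\delta)$ through $F(\qoldk)+F'(\qoldk)(\qk-\qoldk)$, apply Assumption \ref{tangconeS} and \eqref{inexNewton}, solve for $I_4^k$ (which needs $c_{\cS}^2 c_{tc}^2 < 1$, hence ``$c_{tc}$ sufficiently small''), pass to the discrete quantities via \eqref{intcond} and the first part of \eqref{etacond3}, and invoke \eqref{etacond3S} to get $I_{4,h}^k \le c_2 I_{4,h}^{k-1} + a^k$ with $a^k\to0$; unrolling the recursion and using $c_2<1$ shows $I_{3,h}^k\to0$ if $k_*=\infty$, contradicting \eqref{stop}, so $k_*<\infty$. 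For part (iii), \eqref{noiseS}, \eqref{intcond}, \eqref{etacond2} and the stopping rule give $\cS(F(\qoldkstern),g)\to0$ as $\delta\to0$ (again via Item \ref{eq_IRGNM_Striangleineq}); by part (i) the iterates lie in the sublevel set $\mathcal{C}(\cR(q^\dagger))$, which is $\tau_Q$-sequentially compact by Assumption \ref{as:ip:bregman}, so a $\tau_Q$-convergent subsequence exists, and by Assumption \ref{FseqclosedS} its limit $q^*$ solves \eqref{OEq}. The remaining ``strong'' convergence statement is where I would rely on the Bregman-distance machinery and Assumption \ref{source2}: as in the proof of Theorem \ref{rates} and the cited results \cite{KKV10,KSS09}, the inequality $\cR(\qoldksterndeltal)\le\cR(q^*)$ plus $\tau_Q$-lower semicontinuity of $\cR$ forces $\cR(\qoldksterndeltal)\to\cR(q^*)$, which together with the variational inequality yields $D_\cR^\xi(\qoldksterndeltal,q^*)\to0$ and hence the claimed convergence; uniqueness of $q^\dagger$ upgrades subsequential to full convergence by a standard subsequence argument.

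The main obstacle I anticipate is bookkeeping the constants through the repeated use of the quasi-triangle inequality in part (i): each application of Item \ref{eq_IRGNM_Striangleineq} introduces a factor $c_{\cS}$, and one must route the intermediate points ($F(q^\dagger)$ first, then $g$) so that the final constant is exactly $c_{\cS}\!\left(c_{\cS}c_{tc}^2 + \frac{1+c_{\cS}c_{tc}^2}{\tau^2}\right)$ to match \eqref{taucondS} and \eqref{etacond1S} — in the Hilbert case this was the clean $2(c_{tc}^2 + (1+c_{tc})^2/\tau^2)$, and here the asymmetry of $\cS$-arguments and the placement of $c_{\cS}$ versus $c_{\cS}^2$ must be tracked carefully. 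A secondary subtlety is that $\cS$ is not a metric, so ``triangle-inequality'' steps that were automatic for norms now cost a controllable but nontrivial constant, and one must check that the smallness hypotheses on $c_{tc}$ and largeness of $\tau$ indeed make \eqref{taucondS}, \eqref{etacond1S}, \eqref{etacond3S} simultaneously satisfiable.
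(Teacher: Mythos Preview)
Your plan for parts (i) and (ii) matches the paper's proof essentially line by line: the same routing of intermediate points ($g$ first, then $g^\delta$) through Item~\ref{eq_IRGNM_Striangleineq}, the same use of Assumption~\ref{tangconeS}, and the same recursion on $I_{4,h}^k$. The constants you anticipate are exactly those the paper obtains.

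The one place where you diverge from the paper is at the end of part~(iii). You propose an additional Bregman-distance argument invoking Assumption~\ref{source2} to obtain ``strong'' convergence. But reread the statement: unlike Theorem~\ref{conv}, part~(iii) here only asserts $\tau_Q$-subsequential convergence (and full $\tau_Q$-convergence under uniqueness), not norm or Bregman convergence. The paper accordingly stops once $\tau_Q$-compactness from Assumption~\ref{as:ip:bregman} plus Assumption~\ref{FseqclosedS} identify the limit as a solution; the uniqueness upgrade is then the standard subsequence argument. Your extra step is unnecessary and, more importantly, not licensed: Assumption~\ref{source2} is \emph{not} among the hypotheses of Theorem~\ref{convS} (it enters only in Theorem~\ref{ratesS}). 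Drop that final paragraph and your proof is complete and aligned with the paper's.
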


\begin{proof}


The proof basically follows the lines of the proof of Theorem \ref{conv}, where we have to replace the specific 
fitting and regularization terms by $\cS$ and $\cR$:
\begin{itemize}
\item[(i):]
For all $k<k_*$ and any solution $q^\dag$ of \eqref{OEq} we have by \eqref{intcond} and
minimality of $\qk$
\begin{equation}\label{eq_minS}
I_{1,h}^{k} \leq I_1^{k} + \eta_1^{k} \leq \cS (F'(\qoldk)(q^\dag-\qoldk) +F(\qoldk),g^\delta) + {\frac 1 {\beta_k}}
\cR(q^\dag) + \eta_1^{k}\,.
\end{equation}
In here, according to \eqref{noiseS},  \eqref{stop} and Assumption \ref{tangconeS},
as well as the inequaltity $(a+b)^2 \le 2a^2 + 2b^2$ for arbitrary $a,b\in \R$ we can estimate as follows
\begin{align}
\cS(F'(\qoldk)(q^\dag-\qoldk) + F(\qoldk),g^\delta) 
&\leq c_{\cS}\left( \cS(g, F'(\qoldk)(q^\dag-\qoldk) + F(\qoldk)) +\delta^2\right)
\nonumber\\
&\leq c_{\cS}\left(c_{tc}^2 \cS(g,F(\qoldk)) +\delta^2\right) \nonumber\\
&\leq c_{\cS}\left(c_{tc}^2 (c_{\cS}(\cS(g^\delta,F(\qoldk))+\delta^2) +\delta^2\right) \nonumber\\
&\leq c_{\cS}^2 c_{tc}^2 I_3^k +  c_{\cS}(1+c_{\cS}c_{tc}^2)\delta^2 \nonumber\\
&\leq c_{\cS}^2 c_{tc}^2(I_{3,h}^{k} + \eta_3^{k})+ c_{\cS}(1+c_{\cS}c_{tc}^2)\frac{I_{3,h}^{k}}{\tau^2}\nonumber\\
&\leq c_{\cS} \left(c_{\cS} c_{tc}^2  + \frac {1+c_{\cS}c_{tc}^2}{\tau^2}\right) I_{3,h}^{k}  + c_{\cS}^2 c_{tc}^2 \eta_3^k
 \label{eq_IRGNM_estTaylorS}\,.
\end{align}
On the other hand, from \eqref{inexNewton} 
and the fact that $I_{1,h}^k = I_{2,h}^k + \frac 1 {\beta_k} \cR(\qhk)$ (cf. \eqref{I1I2})
there follows that 
\begin{equation}\label{eq_IRGNM_min1S}
I_{1,h}^{k}= I_{2,h}^{k} + {\frac 1 {\beta_k}} \cR(\qhk)
\geq {\tilde{\underline\theta}} I_{3,h}^{k}  + {\frac 1 {\beta_k}} \cR(\qhk)\,,
\end{equation}
which together with the previous inequality and \eqref{etacond1S} gives
\begin{align*}
\tilde{\underline\theta} I_{3,h}^{k} +  \frac 1 {\beta_k} \cR(\qhk) 
&\leq I_1^k + \eta_1^k \\
&\leq \cS(F'(\qoldk)(q^\dag-\qoldk) + F(\qoldk),g^\delta) + \frac 1 {\beta_k} \cR(q^\dagger) + \eta_1^k\\
& \leq c_{\cS} \left(c_{\cS} c_{tc}^2  + \frac {1+c_{\cS}c_{tc}^2}{\tau^2}\right) I_{3,h}^{k}  
+ c_{\cS}^2 c_{tc}^2 \eta_3^k
+ \frac 1 {\beta_k} \cR(q^\dagger) + \eta_1^k\\
& \le  {\tilde{\underline\theta}} I_{3,h}^{k} + \frac 1 {\beta_k} \cR(q^\dagger)\,,
\end{align*}
which implies \eqref{qhkstqdagS}.
\item[(ii):] Furthermore, for all  $k< k_*$ we have by the triangle inequality as well as Assumption
\ref{tangconeS} and \eqref{inexNewton}
\begin{align}
I_4^{k} & = \cS(F(\qk),g^\delta)\nonumber\\
& \leq c_{\cS} \left( \cS(F'(\qoldk)(\qk - \qoldk) +F(\qoldk),g^\delta)
+ \cS(F'(\qoldk)(\qk - \qoldk) + F(\qoldk),F(\qk))\right)\nonumber\\
& \leq c_{\cS}\left(I_2^{k} + c_{tc}^2 \cS(F(\qk),F(\qoldk))\right)\nonumber\\
& \leq c_{\cS} \left( I_{2,h}^{k} +\eta_2^k\right) 
+ c_{\cS}^2 c_{tc}^2 \left(\cS(F(\qk),g^\delta) + \cS(F(\qoldk),g^\delta)\right)\nonumber\\
& = c_{\cS} \left( \tilde{\overline\theta} I_{3,h}^{k} +\eta_2^k\right) 
+ c_{\cS}^2 c_{tc}^2 \left(I_4^k + I_3^k\right)\,,
\label{eq_IRGNM_estI4_1S}
\end{align}
which implies
\[
  I_4^k \le \frac 1 {1-c_{\cS}^2c_{tc}^2}\left(c_{\cS} {\tilde{\overline\theta}} I_{3,h}^{k} + c_{\cS} \eta_2^k + c_{\cS}^2c_{tc}^2 I_3^k \right)\,.
\]
With \eqref{intcond} and \eqref{etacond3} we can further deduce
\begin{align*}
I_{4,h}^{k} &\le \frac 1 {1-c_{\cS}^2c_{tc}^2}\left((c_{\cS} {\tilde{\overline\theta}} + c_{\cS}^2c_{tc}^2)I_{3,h}^{k} + c_{\cS} \eta_2^k + c_{\cS}^2 c_{tc}^2 \eta_3^k \right) + \eta_4^k\\
&\le \frac{c_{\cS} {\tilde{\overline\theta}} + c_{\cS}^2 c_{tc}^2}{1-c_{\cS}^2c_{tc}^2} (1+c_3)I_{4,h}^{k-1}  
+ \frac 1 {1-c_{\cS}^2c_{tc}^2} \left(r^k + c_{\cS} \eta_2^k + c_{\cS}^2c_{tc}^2 \eta_3^k \right) + \eta_4^k\\
&\le c_2 I_{4,h}^{k-1}  
+ \frac 1 {1-c_{\cS}^2c_{tc}^2} \left(r^k + c_{\cS} \eta_2^k + c_{\cS}^2c_{tc}^2 \eta_3^k \right) + \eta_4^k\,.
\end{align*}
With the notation 
\be{eq_IRGNM_ai}
  a^i\coloneqq   \frac 1 {1-c_{\cS}^2c_{tc}^2} \left(r^i + c_{\cS} \eta_2^i + c_{\cS}^2c_{tc}^2 \eta_3^i \right) + \eta_4^i \quad\forall i\in \{1,2,\dots,k\}
\ee
there follows recursively 
\be{eq_IRGNM_estI4_2}
  I_{4,h}^k \le c_2^{k}I_{4,h}^0+\sum_{j=0}^{k-1}c_2^j a^{k-j}\,.
\ee

Note that by the second part of \eqref{etacond2}, the second part of \eqref{etacond3} and the fact that $r^k\to 0$ as $k\to\infty$ (by definition 
of $r^k$) , we have $c_2^{k}I_{4,h}^0+\sum_{j=0}^{k-1}c_2^j a^{k-j} \to 0$ as $k\to\infty$. So, if the discrepancy principle never got active (i.e., $k_*=\infty$), the
sequence $(I_{4,h}^k)_{k\in\mathbb{N}}$ and therewith by assumption \eqref{etacond2} also
$(I_{3,h}^k)_{k\in\mathbb{N}}$ would be bounded by a sequence tending to zero as $k\to\infty$, which implies
that $I_{3,h}^k$ would fall below $\tau^2\delta^2$ for $k$ sufficiently large, thus yielding a contradiction. Hence
the stopping index $k_* < \infty$ is well-defined and finite.

\item[(iii):] With \eqref{noiseS}, \eqref{intcond}, \eqref{etacond2} and definition of $k_*$, we have 
\begin{align}
  \cS(F(\qoldkstern),g) &\le c_{\cS}\left( \cS(F(\qoldkstern),g^\delta) + \delta^2\right) \le 
  c_{\cS} \left( I_{3,h}^{k_*} + \eta_3^k + \delta^2 \right) \le c_{\cS}\left((1+c_1)I_{3,h}^{k_*} +\delta^2\right)\nonumber\\
  & \le c_{\cS}\left((1+c_1)\tau^2 +1\right) \delta^2 \to 0
  \label{eq_IRGNM_FconvS}
\end{align}
as $\delta \to 0$.
By (i), (ii) and \eqref{ip:m} in assumption \ref{as:ip:bregman}  
$\qoldkstern = \qhkminuseinsstern$ 
has a $\tau_Q$ convergent subsequence 
$(\qoldksterndeltal)_{l\in\mathbb{N}}$ and due to Assumption \ref{FseqclosedS} and \eqref{eq_IRGNM_FconvS}, 
the limit of every $\tau_Q$ convergent subsequence is a solution to $F(q)=g$.
\end{itemize}
\end{proof}

It is readily checked that (like in the case of $\cR$, $\cS$ being defined by squared Hilbert space norms as
considered in Theorem 1 of \cite{KKV10})  
any approximation $\tilde{q}$  of a  solution $q^\dag$ of $F(q)=g$ with $\|g-g^\delta\|\leq \delta$ such that 
$$    
\cR(\tilde{q})\leq \cR(q^\dag)
\mbox{ and }
\cS(F(\tilde{q}),g^\delta)\leq \hat{\tau}^2 \delta^2
$$
with $\hat{\tau}$ independent of $\delta$, as well as the variational inequality \eqref{source2} holds,
satisfies the rate estimate
$$ D_\cR^\xi(\tilde{q},q^\dag)
\leq \frac{\bar{C}^2\delta^2}{\Theta^{-1} 
\left(\tfrac{\bar{C}}{c}\delta\right)}
= c^2 f^2(\Theta^{-1}(\tfrac{\bar{C}}{c}\delta))
$$
with $\bar{C}^2=c_{\cS}(\hat{\tau}^2+1)$.

Hence we directly obtain from \eqref{qhkstqdagS} and the definition of $k_*$ according to \eqref{stop} 
the following rates result.

\begin{thm} \label{ratesS}
Let the conditions of Theorem \ref{convS} and additionally
the variational inequality \eqref{source2} for some function $f$ with \eqref{fphipsi} be fullfiled.

Then there exists a $\bar{\delta}>0$ and a constant $\bar{C}>0$ independent of $\delta$ such that
for all $\delta\in (0,\bar{\delta}]$
\begin{equation}\label{rates0S}
D_\cR^\xi(\qhkstern,q^\dag)
\leq \frac{\bar{C}^2\delta^2}{\Theta^{-1} 
\left(\tfrac{\bar{C}}{c}\delta\right)}
= c^2 f^2(\Theta^{-1}(\tfrac{\bar{C}}{c}\delta))\,,
\end{equation}
where $\Theta(\lambda)\coloneqq f(\lambda)\sqrt{\lambda}$. 
\end{thm}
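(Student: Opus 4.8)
The plan is to read Theorem~\ref{ratesS} off from the auxiliary rate estimate displayed just above it (the Banach space counterpart of Theorem~1 in \cite{KKV10}), together with part~(i) of Theorem~\ref{convS} and the stopping rule \eqref{stop}. Concretely, I would apply that auxiliary estimate to $\tilde{q}:=\qhkstern$, i.e.\ to the iterate $\qoldkstern=\qhkminuseinsstern$ returned at termination; the proof then reduces to checking the two hypotheses of the auxiliary statement for this $\tilde{q}$ and reading off the constant $\bar{C}$.

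The first hypothesis, $\cR(\tilde{q})\le\cR(q^\dag)$, is immediate: since $\qoldkstern=q^{k_*-1}_{h_{k_*-1}}$ and $k_*-1<k_*$, it is exactly \eqref{qhkstqdagS} of Theorem~\ref{convS}(i) evaluated at $k=k_*-1$. For the second hypothesis I need $\cS(F(\tilde{q}),g^\delta)\le\hat{\tau}^2\delta^2$ with $\hat{\tau}$ independent of $\delta$; since $\cS(F(\qoldkstern),g^\delta)=I_3^{k_*}$ by definition \eqref{eq_IRGNM_I1234kGeneral}, I would combine \eqref{intcond} ($I_3^{k_*}\le I_{3,h}^{k_*}+\eta_3^{k_*}$), the first inequality in \eqref{etacond2} ($\eta_3^{k_*}\le c_1 I_{3,h}^{k_*}$) and \eqref{stop} ($I_{3,h}^{k_*}\le\tau^2\delta^2$) to obtain $\cS(F(\qoldkstern),g^\delta)\le(1+c_1)\tau^2\delta^2$, i.e.\ $\hat{\tau}^2=(1+c_1)\tau^2$ --- this reproduces the relevant part of the chain already written out in \eqref{eq_IRGNM_FconvS}. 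Inserting $\hat{\tau}^2=(1+c_1)\tau^2$ into the auxiliary estimate then yields the asserted bound, with $\bar{C}^2=c_\cS(\hat{\tau}^2+1)=c_\cS\,((1+c_1)\tau^2+1)$.

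For completeness I would also record the short argument underlying the auxiliary estimate, which is where the only real computation lies. Fix $\xi\in\partial\cR(q^\dag)$ and put $d:=D_\cR^\xi(\tilde{q},q^\dag)$. From $\cR(\tilde{q})\le\cR(q^\dag)$ and the definition \eqref{eq_IRGNM_bregmandistance} one has $d\le|\langle\xi,\tilde{q}-q^\dag\rangle_{Q^*,Q}|$; bounding the right-hand side by Assumption~\ref{source2} and estimating $\cS(F(\tilde{q}),F(q^\dag))=\cS(F(\tilde{q}),g)\le c_\cS(\cS(F(\tilde{q}),g^\delta)+\cS(g^\delta,g))\le c_\cS(\hat{\tau}^2+1)\delta^2=:\bar{C}^2\delta^2$ (using items~\ref{item_IRGNM_Ssymm} and~\ref{eq_IRGNM_Striangleineq} of Assumption~\ref{ass_IRGNM_SRass} and \eqref{noiseS}) gives $d\le c\,d^{1/2}f(\bar{C}^2\delta^2/d)$. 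Setting $\lambda:=\bar{C}^2\delta^2/d$ turns this into $\bar{C}\delta\le c\,f(\lambda)\sqrt{\lambda}=c\,\Theta(\lambda)$; since $\Theta$ is strictly monotonically increasing by \eqref{fphipsi}, it follows that $\lambda\ge\Theta^{-1}(\tfrac{\bar{C}}{c}\delta)$, hence $d=\bar{C}^2\delta^2/\lambda\le\bar{C}^2\delta^2/\Theta^{-1}(\tfrac{\bar{C}}{c}\delta)$, and evaluating $\Theta$ at $\mu=\Theta^{-1}(\tfrac{\bar{C}}{c}\delta)$ shows this last quantity equals $c^2 f^2(\Theta^{-1}(\tfrac{\bar{C}}{c}\delta))$.

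I do not anticipate a genuine obstacle once Theorem~\ref{convS} is in hand: the argument is essentially bookkeeping and invokes no estimate beyond those already present in the proof of Theorem~\ref{convS} and in the auxiliary statement. The one point that needs a word of care --- and the reason the statement carries a threshold $\bar{\delta}>0$ --- is the substitution step above: it presupposes $d>0$ (if $d=0$ the bound is trivial) and, for non-H\"older source functions $f$ such as the logarithmic one, a routine case distinction according to whether $\bar{C}^2\delta^2/d$ lies in the domain of $f$; restricting to $\delta\in(0,\bar{\delta}]$ for $\bar{\delta}$ small enough renders both harmless.
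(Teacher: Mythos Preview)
Your proposal is correct and follows essentially the same route as the paper: the paper's proof consists of a single sentence referring to the analogue of Theorem~1 in \cite{KKV10}, applied with the two ingredients you identify, namely \eqref{qhkstqdagS} from Theorem~\ref{convS}(i) and the stopping rule \eqref{stop} (together with \eqref{etacond2}) to obtain $\hat{\tau}^2=(1+c_1)\tau^2$ and hence $\bar{C}^2=c_\cS((1+c_1)\tau^2+1)$. Your write-up actually supplies more detail than the paper, since you also spell out the short argument behind the auxiliary rate estimate itself, which the paper only records as ``readily checked''.
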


\begin{proof}
  The proof can be done analogously to the proof of Theorem 1 in \cite{KKV10} with
  the same replacements as in the proof of Theorem \ref{convS}.
\end{proof}

\section{Conclusions and Remarks}\label{sec_conclusions}
In this paper we consider the iteratively regularized Gauss-Newton method and its adaptive discretization by means of goal oriented error estimators. 
Our aim is to recover convergence as in the continuous setting for discretized hence approximate computations. The key result is that control of a small 
number (four) real valued quantities per Newton step suffices to guarantee convergence and convergence rates. While we have studied the problem in a reduced 
form here, using the parameter-to-solution map, the related paper \cite{KKV13}{(part II of this paper)} develops and studies all-at-once formulations. 
Numerical tests are provided in {part II of this paper} \cite{KKV13}.

\section{Acknowledgments}\label{sec_acknowledgments}
The authors wish to thank Boris Vexler for fruitful discussions, for his support, and for his valuable suggestions after a careful reading of this manuscript.

Moreover, we gratefully acknowlegde financial
support by the German Science Foundation (DFG)  within the grant KA 1778/5-1 and VE 368/2-1 ``Adaptive Discretization
Methods for the Regularization of Inverse Problems''.

\end{document}